\documentclass[11pt]{article}

\usepackage{amsfonts}
\usepackage{mathrsfs}
\usepackage{amsmath}
\usepackage{amsthm}
\usepackage{amssymb}
\usepackage{graphicx}
\usepackage{latexsym}
\usepackage{indentfirst}
\usepackage{authblk}
\usepackage[pagewise]{lineno}

\numberwithin{equation}{section}

\newtheorem{thm}{Theorem}[section]
\newtheorem{lema}[thm]{Lemma}

\newtheorem{rmk}[thm]{Remarks}

\newtheorem{definition}[thm]{Definition}
\newtheorem{theorem}[thm]{Theorem}

\title{\Large\bf Global Solutions to a Fourth-order Degenerate Model for Surface-Tension-Driven Convection}

\author{\sc  Wending Wu\thanks{Corresponding author. E-mail address: wuwending@mail.bnu.edu.cn} and Xiaojing Xu\\
	{\footnotesize \it Laboratory of Mathematics and Complex Systems (Ministry of Education),}\\
	{\footnotesize \it School of Mathematical Sciences, Beijing Normal University, Beijing 100875, China.}}

\date{}

\begin{document}

\maketitle
\noindent {\bf Abstract.} This paper investigates the global existence and non-negativity of weak solutions to an initial-boundary value problem for a one-dimensional fourth-order nonlinear degenerate parabolic equation. This model governs the convection phenomena in thin films driven by surface tension. 
Our analytical approach begins with the formulation of a regularized problem and a corresponding Galerkin approximating scheme. We first establish the existence of solutions to the approximate problem. Subsequently, by constructing specialized energy and entropy functionals, we derive uniform a priori estimates for the approximating solutions. 
Leveraging the Aubin-Lions compactness lemma, we pass to the limit and establish the non-negativity of the limit function. Finally, we demonstrate that this limit is indeed a global weak solution to the original initial-boundary value problem.
\\[0.2cm]
\noindent{\bf Keywords.} Fourth-order parabolic equation; Degenerate parabolic equation; Global existence; Non-negativity; Entropy estimates; Initial-boundary value problem; Convection
\\[0.2cm]
\noindent{\bf AMS subject classifications.} 35K35, 35K65, 35A01, 76A20, 76E06

\section{Introduction}
The emergence of spontaneous ordered structures from homogeneous states is a fundamental characteristic of nonlinear non-equilibrium systems. In fluid dynamics, thermal convection serves as a classical paradigm for studying such self-organization, originating from B\'enard's pioneering experiments. Convection in shallow fluid layers is primarily governed by two competing mechanisms: the bulk buoyancy driving Rayleigh B\'enard convection and the surface tension gradients responsible for Marangoni B\'enard convection. While the former possesses a well-established mathematical framework, the latter dominates in thin films and microgravity environments, presenting a highly nonlinear and mathematically challenging landscape. The intrinsic complexity of Marangoni B\'enard convection arises from the driving force acting directly at the deformable free surface, strongly coupling hydrodynamic stresses with thermal fields \cite{colinet2001nonlinear}. This intricate coupling makes it not only a crucial subject for rigorous mathematical analysis but also a critical element in technologies such as microfluidics and coating processes.

When the free surface is allowed to deform significantly, a new class of instabilities emerges, characterized by long-wave modulations. In this regime, the full three-dimensional Navier-Stokes equations coupled with heat transfer and a dynamic boundary condition can be systematically reduced via long-wave asymptotic methods. The resultant evolution equation, often a nonlinear partial differential equation of high order, captures the essential physics in a more tractable form. A canonical model for a thin liquid layer heated from below, originally derived by Davis \cite{davis} and further discussed in \cite{colinet2001nonlinear}, governs the evolution of the film height $h(t,x)$:
\begin{align}\label{laiyuan}
	h_t=-\nabla\cdot\left( \frac{1}{3Ca}h^3\nabla\Delta h-\frac{Ga}{3}h^3\nabla h+ \frac{Ma}{2}h^2\nabla h\right),
\end{align}
where $t>0$ is time and $x \in \mathbb{R}^2$ is the spatial coordinate. The operators $\nabla$ and $\Delta$ denote the standard gradient and Laplacian, while the positive constants $Ma$, $Ga$, and $Ca$ represent the Marangoni, Galileo, and capillary numbers, respectively. Referred to hereafter as the Davis equation, this model successfully predicts complex interfacial phenomena, including the formation and coarsening of dry spots \cite{vanhook, vanhook2}, which are critical failure modes in industrial coating applications.

The equation $(\ref{laiyuan})$ can be written in a more general form
\begin{align}\label{tuozhan}
	h_t=-\nabla\cdot\left( \theta(h)\nabla\Delta h+\psi(h)\nabla h\right).
\end{align}
According to \cite{cankao5,hou12,hou14}, the term $\theta(h)$ in Eq.$(\ref{tuozhan})$ represents surface tension effects; the term $\psi(h)$ reflects the
additional forces (gravity, thermocapillary effects or Van der Waals interactions for examples) and $h$ is the thickness of the film.

In one spatial dimension, the pioneering work of Bernis and Friedman \cite{cankao4} established the existence theory of nonnegative weak solutions for the purely capillary case ($\psi(h) \equiv 0$) with $\theta(h)=h^n$. Subsequent studies extensively explored the qualitative behavior of these solutions \cite{cankao3}, self-similar source-type solutions \cite{cankao5}, and their finite speed of propagation \cite{hou5}. Building upon this foundation, Bertozzi and Pugh \cite{cankao2} incorporated a nonnegative lower-order term ($\psi(h) \ge 0$), proving the existence of nonnegative weak solutions under specific assumptions on the ratio $\psi(h)/\theta(h)$.

In higher space dimensions and when the case $\psi(h)\equiv 0$, Dal Passo et al. \cite{hou7} prove the existence and positivity for $(\ref{tuozhan})$ with nonnegative initial values under appropriate boundary conditions using energy and entropy estimates. Gr\"un \cite{cankao6,cankao7} showed an existence result and a finite speed of propagation for model $(\ref{tuozhan})$ by using the entropy estimate and a generalization of Bernis' interpolation inequalities type. In the case $\theta(h)=h^n$ and $\psi(h)=h^m$, the existence of solutions and the finite speed of propagation property with nonlinear diffusion are developed in \cite{hou8} according to exponents $n$ and $m$. Recently, Shishkov and Taranets \cite{cankao1} investigated $(\ref{tuozhan})$ for $\theta(h)=|h|^n$ and $\psi(h)=|h|^m$; they constructed a nonnegative global generalized solution for nonnegative initial data in unbounded space and proved the
finiteness of the speed of propagation of the support of this solution.

Despite its physical relevance, an analytical framework for this fourth-order degenerate parabolic equation remains incomplete. The primary difficulty arises from the degenerate mobility, which causes a loss of uniform parabolicity as the film thickness $h$ approaches zero. This regime corresponds to impending rupture and makes questions of global existence and regularity non-trivial. While previous work \cite{wu} established global weak solutions for the one-dimensional Davis equation \eqref{laiyuan} under natural and flux-free boundary conditions, imposing non-homogeneous boundary conditions introduces new challenges in controlling boundary integrals within energy and entropy functionals.

Motivated by these mathematical intricacies, the present paper investigates the associated one-dimensional initial-boundary value problem. To formulate this problem, we let $\Omega=(a,d)$ be a bounded open interval and denote $Q_{T}:=(0,T)\times\Omega$ for a given final time $T>0$. Simplifying the notation, we replace the physical parameters $\frac{1}{3Ca}$, $\frac{Ga}{3}$, and $\frac{Ma}{2}$ with positive constants $\alpha_{1}$, $\alpha_{2}$, and $\alpha_{3}$ respectively. The Davis equation \eqref{laiyuan} then takes the following one-dimensional form
\begin{align}\label{1}
	h_t = -\left(\alpha_{1} h^3 h_{xxx} -\alpha_{2} h^3 h_{x} +\alpha_{3} h^2 h_{x}\right)_{x}, \qquad \text{in} \,\, Q_{T}.
\end{align}
We supplement this equation with the following initial and boundary conditions
\begin{align} \label{2}
	h&=c, \qquad {\rm on} \,\, [0,T]\times\partial\Omega,
	\\ \label{2.b}
	h_{xx}&=0, \qquad {\rm on} \,\, [0,T]\times\partial\Omega,
	\\ \label{3}
	h(0,x)&= h_{0}(x), \qquad x \in \Omega,
\end{align}
where $c$ is a positive constant. Fixing both the height and curvature corresponds to a physical setup where the contact line is pinned and the meniscus curvature is maintained. Mathematically, setting $h=c>0$ at the boundaries circumvents the issue of boundary degeneracy. Under this setup, our objective is to prove the global-in-time existence and non-negativity of weak solutions. By establishing these analytical properties, our work provides a mathematical foundation for the Davis model and bridges the gap between formal asymptotic derivations and numerical observations in thin-film hydrodynamics.
\\
\noindent\textbf{Statement of the main result.} We begin by introducing the essential function spaces and notations. Standard conventions are employed for Lebesgue and Sobolev spaces. We define $H_c^1(\Omega)$ as the set of functions in $H^1(\Omega)$ whose trace on the boundary $\partial\Omega$ equals the constant $c$. The dual spaces of $H^1_0(\Omega)$ and $H^2_0(\Omega)$ are denoted by $H^{-1}(\Omega)$ and $H^{-2}(\Omega)$ respectively. Furthermore, for any measurable spatial or space-time domain $Z$, including $\Omega$ and $Q_T$, the standard $L^2(Z)$ inner product is defined as
$$
(v_1,v_2)_Z = \int_{Z} v_1 v_2,
$$
where the integration is with respect to the Lebesgue measure.

We are now in a position to rigorously define the weak solutions to the initial-boundary value problem $(\ref{1})-(\ref{3})$ as follows.

\begin{definition}\label{dingyi1}
	Let $h_0\in H_c^1(\Omega)$. A function $h=h(t,x)$ with
	\begin{align}\label{d1}
		h \in L^2(0,T;H^2(\Omega))\cap L^{\infty}(0,T;H_c^1(\Omega)) 
	\end{align}
	is called a weak solution to the problem $(\ref{1})-(\ref{3})$, if
	\begin{align}\label{d2}
		&(h,\varphi_t)_{Q_{T}}-\alpha_{1} ( h_{xx}, (h^3\varphi_{x})_{x})_{Q_{T}} 
		\nonumber\\
		=&\alpha_{2} (h^3 h_{x}, \varphi_{x})_{Q_{T}} - \alpha_{3} (h^2 h_{x}, \varphi_{x})_{Q_{T}}-(h_0,\varphi(0))_{\Omega}
	\end{align}
	holds for any $\varphi \in C^\infty([0,T] \times \bar{\Omega})$ satisfying $\varphi = 0$ on $[0,T] \times \partial\Omega$ and $\varphi(T, \cdot) = 0$.
\end{definition}

The main result of this article can be stated as follows.
\begin{theorem}\label{jieguo}
	Suppose that $h_0\in H_c^{1}(\Omega)$ satisfies
	\begin{align}\label{jiashe}
		\frac{1}{h_0}\in L^1(\Omega) \quad \text{and} \quad h_0 \geq 0 \ \ a. e. \ \text{in} \ \Omega.
	\end{align}
	Then for any $T>0$, there exists a weak solution $h$ to the problem $(\ref{1})-(\ref{3})$ in the sense of Definition $\ref{dingyi1}$. This solution satisfies
	\begin{align}\label{2.9}
		&h_t\in L^{2}(0,T;H^{-1}(\Omega)),
		\\\label{daoshujieguo}
		&\frac{1}{h}\in L^{\infty}(0,T;L^1(\Omega)),
		\\\label{jieguozheng}
		&h \geq 0 \ \ \text{in} \ Q_T.
    \end{align}
	Moreover, the set $\{x \in \Omega \mid h(t,x)=0\}$ is of measure zero for any $t\in [0,T]$.
\end{theorem}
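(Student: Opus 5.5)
We follow the Bernis--Friedman strategy, adapted to the Dirichlet-type data $h=c$, $h_{xx}=0$. First we regularize the mobility. For $\varepsilon>0$ we replace $h^3$ by $f_\varepsilon(h)=\frac{|h|^3 h^{2}}{\varepsilon |h|^3+h^{2}}+\varepsilon$ (or simply $|h|^3+\varepsilon$), and treat the lower-order fluxes $h^3h_x$, $h^2h_x$ similarly with $|h|$. We also mollify the initial data to $h_{0\varepsilon}=h_0+\varepsilon^{\theta}$, corrected near the boundary so that the trace $c$ is kept. We write $h=c+u$ with $u=0$, $u_{xx}=0$ on $\partial\Omega$, and use the eigenfunctions $\sin(k\pi(x-a)/(d-a))$ of $-\partial_{xx}$ with Dirichlet conditions; these automatically satisfy both boundary conditions. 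The Galerkin ODE system is locally solvable by Picard--Lindelöf. It extends to global solutions once the energy bound below holds uniformly in the dimension. We then pass to the limit in the dimension with $\varepsilon$ fixed, getting a positive (for small $\varepsilon$, via entropy) classical-type solution $h_\varepsilon$.

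\textbf{Energy estimate.} Test with $-h_{xx}$. This is admissible because $h_t$ vanishes at the boundary, so $\int h_t(-h_{xx}) = \frac{d}{dt}\frac12\int h_x^2$ with no boundary term. Using $h_{xx}=0$ at $\partial\Omega$ we obtain
\[
\frac12\frac{d}{dt}\int h_x^2+\alpha_1\int f_\varepsilon(h)h_{xxx}^2=\int(\alpha_2 h^3-\alpha_3h^2)h_x h_{xxx}.
\]
By Young's inequality the right-hand side is at most $\frac{\alpha_1}{2}\int f_\varepsilon h_{xxx}^2+C\int (h^3+h)h_x^2$. Since $h=c$ at the boundary, $\|h\|_\infty\le c+C\|h_x\|_2$. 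This gives a superlinear Gronwall inequality, $y'\le C(1+y)^{k}$, which is only local. The obstacle is therefore to obtain global control. To get it we combine the energy with the mass-type quantity $\int h$, which is not conserved because of boundary flux. Alternatively, we test with $h-c$, whose gradient term is $-\alpha_2\int h^3h_x^2\le0$, favourable. The destabilizing part is then only the $\alpha_3 h^2$ term, and we absorb it by interpolation, using $\int h^2 h_x h_{xxx}\le \delta\int h^3h_{xxx}^2+C\int h h_x^2$, together with Gronwall. We expect this step, closing a global bound for $\sup_t\|h_x\|_2$ and $\int\!\!\int f_\varepsilon h_{xxx}^2$, to be the main difficulty. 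The fallback is to use the entropy (next step), which gives $\int\!\!\int h_{xx}^2$, and to interpolate the energy's bad term against it.

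\textbf{Entropy estimate.} Test with $G_\varepsilon'(h)-G_\varepsilon'(c)$, where $G_\varepsilon''=1/f_\varepsilon$, so that $G_\varepsilon\sim 1/(2h)$. This test function vanishes on $\partial\Omega$. Integrating by parts, with the boundary terms killed by $h_{xx}=0$, gives
\[
\frac{d}{dt}\int\big(G_\varepsilon(h)-G_\varepsilon'(c)h\big)+\alpha_1\int h_{xx}^2=\int\Big(\alpha_2-\alpha_3\frac{h^2}{f_\varepsilon}\Big)h_x^2.
\]
The right-hand side is bounded by the energy bound, with $h^2/h^3=1/h$ controlled through $\int h_x^2/h\le$ terms handled by the same test. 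Hence $\int G_\varepsilon(h_\varepsilon)$ is bounded uniformly, which yields $1/h\in L^\infty(0,T;L^1)$ in the limit by Fatou's lemma. We also obtain $h\in L^2(0,T;H^2)$. From the equation, $h_t=-J_x$ with $J\in L^2(Q_T)$, since $f^{1/2}h_{xxx}\in L^2$ and $f^{1/2}$ is bounded. Hence $h_t\in L^2(0,T;H^{-1})$.

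\textbf{Passage to the limit and properties.} By Aubin--Lions with $H^1\subset\subset C(\bar\Omega)\subset H^{-1}$ we get $h_\varepsilon\to h$ uniformly on $\bar Q_T$, with weak convergence in $L^2(H^2)$ and weak-star convergence in $L^\infty(H^1_c)$. The weak formulation \eqref{d2} passes to the limit because the term $-\alpha_1(h_{xx},(h^3\varphi_x)_x)$ involves only $h_{xx}$ (weakly convergent) multiplied by a strongly convergent factor. Nonnegativity follows because uniform limits of functions with $\int G_\varepsilon(h_\varepsilon)$ bounded cannot be negative on a set of positive measure, since $G_\varepsilon\to\infty$ there. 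The zero set has measure zero for every $t$ because Fatou's lemma gives $\int 1/h(t)<\infty$; continuity of $h$ in time lets us extend this from a.e. $t$ to all $t$, again by lower semicontinuity.
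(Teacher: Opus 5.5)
Your overall architecture matches the paper's: the sine Galerkin basis, an entropy with $G''=1/h^3$ anchored at $c$, Aubin--Lions, and nonnegativity plus the null zero set via $\sup_t\int G(h^\kappa)\le C$ and Fatou. However, neither of the two a priori estimates that carry the argument is actually established, and as written the plan is circular: your energy fallback needs the entropy bound, while your entropy step invokes the energy bound.

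\textbf{Energy.} Testing with $-h_{xx}$ gives only a local bound, as you note. Testing with $h-c$ does not help either. It controls only $\|h-c\|_2$, and it produces no $\int f h_{xxx}^2$ dissipation; the interpolation you quote belongs to the $-h_{xx}$ test, not this one. Its capillary part equals $-\alpha_1\int h^3h_{xx}^2+2\alpha_1\int hh_x^4-\alpha_1c^2[h_x^3]_a^d$, which has no sign. The missing idea is the gradient-flow structure of the flux,
\[
J:=\alpha_1h^3h_{xxx}-\alpha_2h^3h_x+\alpha_3h^2h_x=h^3\,\partial_x\big(\alpha_1h_{xx}-\Psi'(h)\big),\qquad \Psi''(h)=\alpha_2-\frac{\alpha_3}{h},
\]
with the affine part of $\Psi$ chosen so that $\Psi'(c)=0$, i.e.\ $\Psi'(h)=\alpha_2(h-c)-\alpha_3\ln(h/c)$. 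The paper's $\Psi_{\kappa,c}$ is the regularized version, with $\Psi''=\alpha_2-\alpha_3/|h|_\kappa$, matching its choice of $|h|_\kappa^3$ and $|h|_\kappa^2$ as coefficients. Since $h_{xx}=0$ and $h=c$ on $\partial\Omega$, the full potential $-\alpha_1h_{xx}+\Psi'(h)$ vanishes on the boundary. Testing with it gives $\frac{d}{dt}\int_\Omega\big(\frac{\alpha_1}{2}h_x^2+\Psi(h)\big)dx=-\int_\Omega J^2/h^3\,dx\le0$. Because $\Psi(h)\ge\frac{\alpha_2}{4}h^2-C$ for $h\ge0$ (the Marangoni part of $\Psi$ grows only like $-\alpha_3h\ln h$), this yields the $L^\infty(0,T;H^1)$ bound and $\int\!\!\int h^3h_{xxx}^2\le C$ for every $T$, with no Gronwall argument. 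Regularized solutions that dip below zero are handled by the entropy bound, which forces $\int(h^\kappa)_-\le C\kappa^2$. Your $-h_{xx}$ test is half of this test function; dropping $\Psi'(h)$ is exactly what leaves $\int(\alpha_2h^3-\alpha_3h^2)h_xh_{xxx}$ uncancelled.

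\textbf{Entropy.} Your identity has the signs reversed. Testing with $G'(h)-G'(c)$ actually gives
\[
\frac{d}{dt}\int_\Omega\big(G(h)-G'(c)h\big)\,dx+\alpha_1\int_\Omega h_{xx}^2\,dx+\alpha_2\int_\Omega h_x^2\,dx=\alpha_3\int_\Omega\frac{h_x^2}{h}\,dx .
\]
So the $\alpha_2$ term is dissipative, and the Marangoni term is a singular source that no $H^1$ bound controls. The paper handles it as follows:
\begin{itemize}
\item write $h_x^2/|h|_\kappa=\big(\ln(h+|h|_\kappa)\big)_xh_x$ and integrate by parts; the boundary term is $\alpha_3\ln(c+|c|_\kappa)\int_\Omega h_{xx}$ because $h=c$ on $\partial\Omega$;
\item absorb the result into $\frac{\alpha_1}{2}\int h_{xx}^2$ by Young's inequality;
\item use $\ln^2(s+|s|_\kappa)\le C(1+G_\kappa(s))$, uniformly in $\kappa$, which gives a linear Gronwall inequality.
\end{itemize}
The naive bound $\int h_x^2/h\le\|h_x\|_\infty^2\int 1/h$ would instead be superlinear again. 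This closes the entropy estimate on its own.

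Once that is done, your fallback also becomes viable. By Rolle, $h_x$ vanishes somewhere in $\Omega$, so $\|h_x\|_\infty\le C\|h_{xx}\|_2$. The linear growth of $G$ gives $\sup_t\int|h|\le C$. Hence $\int(|h|^3+|h|)h_x^2\le C\|h_{xx}\|_2^2\big(1+\int h_x^2\big)$, which is a linear Gronwall inequality with coefficient in $L^1(0,T)$.
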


\begin{rmk}
	The nonnegativity $h \ge 0$ is mathematically guaranteed by the strong degeneracy of $h^3$ as $h \to 0$, which naturally prevents the solution from crossing the zero level.
\end{rmk}
\begin{rmk}
	The boundary condition $h=c>0$ prevents degeneracy at the boundary, avoiding the common difficulties associated with $h=0$. Furthermore, the regularities $h \in L^2(0,T;H^2(\Omega))$ and $1/h \in L^{\infty}(0,T;L^1(\Omega))$ are strictly consistent with this positive boundary assumption.
\end{rmk}

We outline the primary mathematical difficulties in the proof of Theorem $\ref{jieguo}$ and the strategies developed to overcome them.

{\em First}, the principal part of the fourth-order parabolic equation contains a mobility term $\alpha_1h^3$ that is not uniformly bounded from below. Consequently, a priori estimates of the highest derivatives contain a weight of the unknown $h$, preventing the direct application of standard compactness lemmas. To address this, we introduce a smooth regularization of the degenerate term to obtain a uniformly parabolic equation, which is then solved using the Galerkin method.

{\em Second}, fourth-order operators lack a classical maximum principle. The non-negativity of the solution must instead be established through specific entropy-functional inequalities tailored to the nonlinear structure of the equation.

{\em Third}, extending the framework of \cite{wu} to our boundary conditions introduces new challenges in controlling boundary integrals. Our strategies are summarized as follows:
\begin{itemize}
	\item [(1)] \textbf{Reconstruction of Energy Estimates:} The functional used in \cite{wu} cannot establish the uniform boundedness of $h$ in $L^\infty(0,T;H^1(\Omega))$ due to boundary terms arising from our specific boundary conditions. We construct a modified energy functional that controls these boundary contributions and yields the required estimates.
	
	\item [(2)] \textbf{Boundary-Adjusted Entropy Estimates:} To handle the boundary contributions in the entropy estimation, we introduce the modified entropy functions (\ref{6.2}). This neutralizes the boundary effects and closes the energy-entropy inequality.
	
	\item [(3)] \textbf{Boundary Integration:} The conditions $h=c$ and $h_{xx}=0$ require a precise treatment of boundary integrals during integration by parts. For instance, the boundary term in (\ref{2.6.11}) must be bounded appropriately to maintain the stability of the weak formulation.
\end{itemize}

\section{Existence for the approximate problem}
In this section, we consider an approximate version of the original initial-boundary value problem $(\ref{1})-(\ref{3})$ and prove the existence of its global weak solution.

The approximate problem is constructed as follows
\begin{align}\label{a1}
	h_t + \left(\alpha_{1} |h|_{\kappa}^3 h_{xxx} -\alpha_{2} |h|_{\kappa}^3 h_{x} +\alpha_{3} |h|_{\kappa}^2 h_{x}\right)_{x}&=0, \qquad \qquad {\rm in} \,\, Q_{T},
\end{align}
with the initial data and boundary conditions
\begin{align}\label{a2}
	h&=c, \qquad {\rm on} \,\, [0,T]\times\partial\Omega,
	\\ \label{a2b}
	h_{xx}&=0, \qquad {\rm on} \,\, [0,T]\times\partial\Omega,
	\\ \label{a3}
	h(0,x)&= h_{0}(x), \qquad x \in \Omega.
\end{align}
Here 
\begin{align}\label{kappa}
	|h|_{\kappa}:=\sqrt{|h|^2+\kappa^2}
\end{align}
and $\kappa\in(0,1]$. This definition of $|h|_{\kappa}$ is reasonable because we will consider the limit as $\kappa\rightarrow 0$, it helps to regularize the degenerate problem $(\ref{1})-(\ref{3})$.

For the initial-boundary value problem $(\ref{a1})-(\ref{a3})$ we introduce a new definition of the weak solutions as follows.

\begin{definition}\label{def2}
	Let $h_0\in H_c^1(\Omega)$. A function $h=h(t,x)$ with $(\ref{d1})$
	is a weak solution of the problem $(\ref{a1})-(\ref{a3})$, if
	\begin{align}\label{ruojie2}
		&(h,\varphi_t)_{Q_{T}}-\alpha_{1} ( h_{xx}, (|h|_{\kappa}^3\varphi_{x})_{x})_{Q_{T}} 
		\nonumber\\
		=&\alpha_{2} (|h|_{\kappa}^3 h_{x}, \varphi_{x})_{Q_{T}} - \alpha_{3} (|h|_{\kappa}^2 h_{x}, \varphi_{x})_{Q_{T}}-(h_0,\varphi(0))_{\Omega}
	\end{align}
	holds for any $\varphi \in C^\infty([0,T] \times \bar{\Omega})$ satisfying $\varphi = 0$ on $[0,T] \times \partial\Omega$ and $\varphi(T, \cdot) = 0$.
\end{definition}

Then we are going to construct approximate solutions to problem $(\ref{a1})-(\ref{a3})$. We first choose the bases functions $\{\omega_{i}\}^{m}_{i=1}$ with $m \in \mathbb{N_+}$, such that
\begin{align*}
	-\omega_{ixx} &= \lambda_{i} \omega_{i}, \qquad {\rm in} \,\, \Omega,
	\\
	\omega_{i} &= 0, \qquad\quad \, {\rm on} \,\, \partial\Omega,
\end{align*}
and $(\omega_{i},\omega_{j}) = \delta_{ij}$. Here $\delta_{ij}$ is the Kronecker symbol. Let $\{h^{m}\}_{m\in \mathbb{N_+}}$ be the approximate solutions defined by 
\begin{align*}
	h^m(t) - c = \sum_{i=1}^{m}g_{im}\omega_{i}\in C^\infty,
\end{align*}
where $g_{im}\in{\mathbb{R}}$ are functions of $t$ to be determined. The initial value $h_0$ is approximated by
\begin{align}\label{chuzhi2}
	h_0^m - c = \sum_{i=1}^{m}g_{im}(0)\omega_{i}\in C^\infty,
\end{align}
such that $\left\|h_0^m-h_0\right\|_{H^1(\Omega)}\rightarrow 0$ as $m \rightarrow \infty$. Thus the coefficients $g_{im}$ are determined by the following system of ordinary differential equations with initial condition $(\ref{chuzhi2})$:
\begin{align*}
	(h_t^m, \omega_{j}) - (\alpha_{1} |h^m|_{\kappa}^3 h_{xxx}^m -\alpha_{2} |h^m|_{\kappa}^3 h_{x}^m +\alpha_{3} |h^m|_{\kappa}^2 h_{x}^m, \omega_{jx}) = 0
\end{align*}
for $1 \leq j \leq m$. Thus, determining the approximate solutions reduces to solving the following Cauchy problem
\begin{align}\label{changweifen}
	\frac{d}{dt}g_{jm} &= F_{j}(g_{1m},\cdots,g_{mm},t),
	\nonumber\\
	g_{jm}(0) &= (h_0-c, \omega_{j}),
\end{align}
where
\begin{align*}
	F_{j}(g_{1m},\cdots,g_{mm},t) = &-\alpha_1\sum_{k=1}^{m}\lambda_k g_{km}\int_{\Omega}\left|\sum_{i=1}^{m}g_{im}\omega_i+c\right|_{\kappa}^3 \omega_{kx}\omega_{jx} dx
	\nonumber\\
	&-\alpha_2\sum_{l=1}^{m}g_{lm}\int_{\Omega}\left|\sum_{i=1}^{m}g_{im}\omega_i+c\right|_{\kappa}^3 \omega_{lx}\omega_{jx} dx 
	\nonumber\\
	&+\alpha_3\sum_{l=1}^{m}g_{lm}\int_{\Omega}\left|\sum_{i=1}^{m}g_{im}\omega_i+c\right|_{\kappa}^2 \omega_{lx}\omega_{jx} dx.
\end{align*}
This is a system of ODEs, therefore, by the standard theory of existence of solutions to an ODE system, we conclude that there exists a solution $g_{im}$ on $[0, t_m]$ satisfying problem $(\ref{changweifen})$.

To prove the global existence of solutions to problem $(\ref{a1})-(\ref{a3})$, one needs to obtain \textit{a priori} estimates that are uniform with respect to $m$ and valid for all $t \in [0,T]$, as established in the following lemmas.

\begin{lema}\label{jinsixianyan1}
	There exists a constant $C_\kappa$ independent of $m$, such that for any $t \in [0,T]$, the following estimate hold
	\begin{align}
		\left\| h^m \right\|_{L^\infty (0,t; H^1(\Omega))} \leq C_\kappa,
		\\
		\left\| |h^m|_{\kappa}^3 h_{xxx}^m \right\|_{L^{2}(Q_t)} \leq C_\kappa,
		\\
		\left\| h_{xx}^m \right\|_{L^2(Q_t)} \leq C_\kappa.
	\end{align}
\end{lema}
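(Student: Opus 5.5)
The plan is a standard energy estimate for the Galerkin system: test with $-h^m_{xx}$, which lies in the span of $\{\omega_j\}$. Since $h^m-c=\sum g_{im}\omega_i$ and $-\omega_{ixx}=\lambda_i\omega_i$, we have $-h^m_{xx}=\sum\lambda_i g_{im}\omega_i$. It vanishes on $\partial\Omega$, so it is an admissible test function. We also have $h^m=c$ on $\partial\Omega$, so $h^m_t=0$ there.

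\textbf{Step 1 (time derivative and leading term).} Multiply the Galerkin equation for $\omega_j$ by $\lambda_j g_{jm}$ and sum over $j$. Because $h^m_t$ vanishes on the boundary, integration by parts gives
\begin{align*}
(h^m_t,-h^m_{xx})=\tfrac12\tfrac{d}{dt}\|h^m_x\|_{L^2}^2 .
\end{align*}
The flux term is paired with $(-h^m_{xx})_x=-h^m_{xxx}$. This produces the dissipative term $\alpha_1\int|h^m|_\kappa^3|h^m_{xxx}|^2$, together with the lower-order terms $-\alpha_2\int|h^m|_\kappa^3h^m_xh^m_{xxx}$ and $\alpha_3\int|h^m|_\kappa^2h^m_xh^m_{xxx}$. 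No boundary terms appear because the test function vanishes on $\partial\Omega$.

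\textbf{Step 2 (absorbing lower-order terms).} By Young's inequality,
\begin{align*}
\Big|\int|h^m|_\kappa^3h^m_xh^m_{xxx}\Big|\le \varepsilon\int|h^m|_\kappa^3|h^m_{xxx}|^2+C_\varepsilon\int|h^m|_\kappa^3|h^m_x|^2 .
\end{align*}
The $\alpha_3$ term is handled the same way, using $|h|_\kappa^{4}/|h|_\kappa^{3}=|h|_\kappa$. The remainders are bounded by
\begin{align*}
C(1+\|h^m\|_{L^\infty}^{3})\|h^m_x\|_{L^2}^2 .
\end{align*}
In one dimension with the fixed trace $c$, we have $\|h^m\|_{L^\infty}\le c+|\Omega|^{1/2}\|h^m_x\|_{L^2}$. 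This leads to a superlinear inequality $y'\le C(1+y^{5/2})$ for $y=\|h^m_x\|^2$, which does not close globally.

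\textbf{Main obstacle and fix.} The obstacle is this superlinear growth. I would instead use $|h|_\kappa\ge\kappa$ and exploit the leading dissipation more strongly. Since $h^m_{xx}=0$ on $\partial\Omega$, Poincaré's inequality gives
\begin{align*}
\int|h^m_{xx}|^2\le C\int|h^m_{xxx}|^2\le C\kappa^{-3}\int|h^m|_\kappa^3|h^m_{xxx}|^2 .
\end{align*}
The remainder then splits as
\begin{align*}
\int|h^m|_\kappa^3|h^m_x|^2\le \|h^m\|_{L^\infty}^{3}\ldots,
\end{align*}
and this is still problematic. The better choice is to keep the weight on the dissipative side: write
\begin{align*}
|h|_\kappa^3h_xh_{xxx}=(|h|_\kappa^{3/2}h_{xxx})(|h|_\kappa^{3/2}h_x),
\end{align*}
and integrate $\int|h|_\kappa^3h_xh_{xxx}$ by parts into
\begin{align*}
-\int|h|_\kappa^3h_{xx}^2-3\int|h|_\kappa h\,h_x^2h_{xx}.
\end{align*}
The boundary terms vanish because $h_{xx}=0$ on $\partial\Omega$. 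The first piece has a favorable sign after accounting for $\alpha_2$. The cubic-in-derivative term is controlled by Gagliardo--Nirenberg in terms of $\|h_{xxx}\|$ with a small power. If this still fails to close globally, I would first derive an independent bound on $\|h^m\|_{L^\infty(Q_T)}$ from the regularized entropy or from a modified energy including $-\frac{\alpha_2}{12}\ldots$ terms, in the spirit of the paper's modified energy functional. With $h^m$ bounded, $y'\le Cy$ follows, and Gronwall closes the estimate with a constant depending on $\kappa$ and $T$ but not on $m$.

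\textbf{Step 3 (conclusions).} Integrating in time gives the bound on $\sup_t\|h^m_x\|^2$. Combined with the trace $c$, this gives the $L^\infty(0,t;H^1)$ bound. It also gives
\begin{align*}
\int_{Q_t}|h^m|_\kappa^3|h^m_{xxx}|^2\le C_\kappa .
\end{align*}
Since $|h^m|_\kappa^3\le C_\kappa$ from the $L^\infty$ bound, $\||h^m|_\kappa^3h^m_{xxx}\|_{L^2}\le C_\kappa$. Since $|h^m|_\kappa\ge\kappa$, $h^m_{xxx}$ is bounded in $L^2(Q_t)$. The bound on $h^m_{xx}$ then follows from Poincaré's inequality, using $h^m_{xx}=0$ on $\partial\Omega$. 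These $m$-independent bounds also extend $t_m$ to $T$.
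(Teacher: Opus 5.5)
Your Steps 1--2 are correct as far as they go. Testing with $-h^m_{xx}\in\mathrm{span}\{\omega_1,\dots,\omega_m\}$ gives
\[
\tfrac12\tfrac{d}{dt}\|h^m_x\|_{L^2}^2+\alpha_1\int_\Omega|h^m|_\kappa^3|h^m_{xxx}|^2=\alpha_2\int_\Omega|h^m|_\kappa^3h^m_xh^m_{xxx}-\alpha_3\int_\Omega|h^m|_\kappa^2h^m_xh^m_{xxx},
\]
and Young's inequality then gives $y'\le C(y+y^{5/2})$. This yields $m$-uniform bounds only on a short interval $[0,T_*]$, whereas the lemma claims them on all of $[0,T]$. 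Nothing later in your proposal closes the estimate.

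The integration by parts you suggest does produce $-\alpha_2\int|h^m|_\kappa^3|h^m_{xx}|^2$. But the $\alpha_3$ term treated the same way gives $+\alpha_3\int|h^m|_\kappa^2|h^m_{xx}|^2$ with the bad sign. You also get cubic terms such as $\int|h^m|_\kappa h^m|h^m_x|^2h^m_{xx}$, whose weights grow like $\|h^m\|_{L^\infty}^2\lesssim 1+y$. After Gagliardo--Nirenberg and absorption into $\kappa^3\|h^m_{xxx}\|^2$, you are still left with a superlinear power of $y$. The step you then defer, an $m$-independent bound on $\|h^m\|_{L^\infty(Q_T)}$ ``from the regularized entropy or a modified energy'', is exactly the substance of the lemma. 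So as written the proposal does not prove it.

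What you are missing is the cancellation built into the paper's modified energy $F_{\kappa,c}$ in (\ref{ziyounenggai}); this is what ``analogous to Section 3'' refers to. The paper tests with $-\alpha_1h_{xx}+\Psi'_{\kappa,c}(h)$, where
\[
\Psi'_{\kappa,c}(h)=\alpha_2(h-c)-\alpha_3\big(\ln(h+|h|_\kappa)-\ln(c+|c|_\kappa)\big).
\]
This vanishes at $h=c$, so it is compatible with the boundary data, and $\Psi''_{\kappa,c}=\alpha_2-\alpha_3/|h|_\kappa$. Hence $\alpha_1h_{xxx}-\Psi''_{\kappa,c}(h)h_x=|h|_\kappa^{-3}J$, where $J$ is the full flux, and $\frac{d}{dt}F_{\kappa,c}=-\int|h|_\kappa^{-3}J^2\le 0$ as in (\ref{22.22}). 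With the coercivity (\ref{4.7}), this gives a time-independent $H^1$ (hence $L^\infty$) bound with no Gronwall argument. The dissipation then controls $\||h|_\kappa^{3/2}h_{xxx}\|_{L^2(Q_t)}$ once the lower-order parts are removed, and your Step 3 finishes the proof. Testing with $-h_{xx}$ alone cannot see this cancellation, which is why your inequality is superlinear.

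Note, however, that this functional is not directly available for the Galerkin approximants. $\Psi'_{\kappa,c}(h^m)$, like $g_\kappa(h^m)$, does not lie in $\mathrm{span}\{\omega_1,\dots,\omega_m\}$, and replacing it by its projection creates error terms not evidently controlled uniformly in $m$. A sound way to combine the two approaches is as follows:
\begin{itemize}
\item Use your inequality to get $m$-uniform bounds on $[0,T_*]$, with $T_*$ depending only on $\|h_0\|_{H^1}$ and the data.
\item Pass to the limit on $[0,T_*]$.
\item Apply the energy identity to the limit solution. There $-\alpha_1h_{xx}+\Psi'_{\kappa,c}(h)\in L^2(0,T_*;H^1_0(\Omega))$ is an admissible test function against $h_t\in L^2(0,T_*;H^{-1}(\Omega))$.
\item Restart from $h(T_*)$, whose $H^1$ norm is controlled by $F_{\kappa,c}[h_0]$, so the local step length does not shrink.
\end{itemize}
This yields Theorem \ref{globalexistence}, though not literally the lemma's bound for a single Galerkin sequence on $[0,T]$.
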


\begin{lema}\label{jinsixianyan2}
	There exists a constant $C_\kappa$ independent of $m$, such that for any $t \in [0,T]$, the following estimate holds
	\begin{align}
		\left\| h_t^m \right\|_{L^{2}(0,t; H^{-1}(\Omega))} \leq C_\kappa.
	\end{align}
\end{lema}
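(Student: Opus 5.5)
The estimate of Lemma \ref{jinsixianyan2} is a duality bound: it reduces to the already established estimates of Lemma \ref{jinsixianyan1}. For fixed $m$ and any $\varphi\in H^1_0(\Omega)$, let $P_m$ be the $L^2$-orthogonal projection onto $\mathrm{span}\{\omega_1,\dots,\omega_m\}$. Since $h^m-c\in\mathrm{span}\{\omega_i\}$ and $c$ is constant, $h^m_t$ lies in this span, so $(h^m_t,\varphi)=(h^m_t,P_m\varphi)$. Because the $\omega_i$ are Dirichlet Laplacian eigenfunctions, $P_m$ is also the $H^1_0$-orthogonal projection, with $\|P_m\varphi_x\|_{L^2}\le\|\varphi_x\|_{L^2}$ (more precisely $\|(P_m\varphi)_x\|\le\|\varphi_x\|$). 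Applying the Galerkin equations with test function $P_m\varphi$ gives
\begin{align*}
(h^m_t,\varphi)=\big(\alpha_{1} |h^m|_{\kappa}^3 h_{xxx}^m -\alpha_{2} |h^m|_{\kappa}^3 h_{x}^m +\alpha_{3} |h^m|_{\kappa}^2 h_{x}^m,\ (P_m\varphi)_x\big).
\end{align*}

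By Cauchy--Schwarz, $|(h^m_t,\varphi)|\le \|J^m(t)\|_{L^2(\Omega)}\|\varphi\|_{H^1_0(\Omega)}$, where $J^m$ denotes the flux in the parenthesis. Hence $\|h^m_t(t)\|_{H^{-1}}\le\|J^m(t)\|_{L^2(\Omega)}$, and it suffices to bound $J^m$ in $L^2(Q_t)$ uniformly in $m$.

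We bound the three pieces of $J^m$ separately.

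\textbf{First term.} The term $\alpha_1|h^m|_\kappa^3h^m_{xxx}$ is bounded in $L^2(Q_t)$ directly by Lemma \ref{jinsixianyan1}.

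\textbf{Lower-order terms.} In one dimension $H^1(\Omega)\hookrightarrow L^\infty(\Omega)$. By Lemma \ref{jinsixianyan1}, $\|h^m\|_{L^\infty(Q_t)}\le C_\kappa$, so $|h^m|_\kappa^3$ and $|h^m|_\kappa^2$ are bounded by constants depending only on $C_\kappa$ (and $\kappa\le1$). Therefore
\begin{align*}
\||h^m|_\kappa^3h^m_x\|_{L^2(Q_t)}+\||h^m|_\kappa^2h^m_x\|_{L^2(Q_t)}\le C\,T^{1/2}\|h^m\|_{L^\infty(0,t;H^1(\Omega))}\le C_\kappa .
\end{align*}

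Squaring and integrating in time then yields $\|h^m_t\|_{L^2(0,t;H^{-1}(\Omega))}\le C_\kappa$, independent of $m$.

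The only delicate point is the projection step: one must verify that $P_m$ is stable in $H^1_0$, which holds precisely because the basis consists of Dirichlet eigenfunctions. Alternatively, one could define the dual norm only on the Galerkin space, but the projection argument gives the full $H^{-1}$ bound directly.
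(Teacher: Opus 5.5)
Your proof is correct and is the same duality argument the paper intends: the paper omits this proof and defers to the analogous estimate \eqref{bei23} of Section 3, which likewise bounds the flux in $L^2(Q_t)$ using the $L^\infty$ bound and the bound on $|h|_\kappa^3 h_{xxx}$, then pairs it with $\varphi_x$. Your projection step through $P_m$ is the detail the paper's ``analogous'' remark skips, and it is needed because at the Galerkin level the equation holds only against elements of $\mathrm{span}\{\omega_1,\dots,\omega_m\}$; it works because, for Dirichlet eigenfunctions, the $L^2$ and $H^1_0$ projections coincide.
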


\noindent For the sake of brevity, we omit the proofs of Lemma \ref{jinsixianyan1} and Lemma \ref{jinsixianyan2}, as they are analogous to the corresponding results in Section 3.

\begin{lema}[Aubin-Lions]\label{aubin}
	Let $B_0$, $B$ and $B_1$ be Banach spaces such that $B_0$ and $B_1$ are reflexive, $B_0$ is compactly embedded into $B$, and $B$ is embedded into $B_1$. For $1\leq p_0, p_1 \leq +\infty$, define the space:
	$$
	W=\left\{f \ \Big| \ f\in L^{p_0}(0,T;B_0), \frac{df}{dt}\in L^{p_1}(0,T;B_1)\right\}.
	$$
	$\mathrm{(1)}$ If $p_0<+\infty$, then the embedding of $W$ into $L^{p_0}(0,T;B)$ is compact.
	\\
	$\mathrm{(2)}$ If $p_0=+\infty$ and $p_1>1$, then the embedding of $W$ into $C([0,T];B)$ is compact.
\end{lema}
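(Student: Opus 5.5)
The statement is the classical Aubin--Lions(--Simon) compactness lemma, and I would prove it by combining an interpolation (Ehrling) inequality with equicontinuity in time supplied by the bound on $\frac{df}{dt}$. The first step is Ehrling's lemma: since $B_0\hookrightarrow B$ compactly and $B\hookrightarrow B_1$, for every $\eta>0$ there is $C_\eta$ with
\begin{align*}
\|v\|_{B}\le \eta\|v\|_{B_0}+C_\eta\|v\|_{B_1}\qquad\text{for all } v\in B_0 .
\end{align*}
I would prove this by contradiction. Suppose there are $v_n$ with $\|v_n\|_{B_0}=1$ and $\|v_n\|_B>\eta+n\|v_n\|_{B_1}$. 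Compactness gives a subsequence with $v_n\to v$ in $B$. Since $\|v_n\|_B$ is bounded, $\|v_n\|_{B_1}\to0$, so $v=0$ in $B_1$ and hence in $B$. This contradicts $\|v_n\|_B>\eta$.

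The second step is equicontinuity in time. For $f\in W$ we have
\begin{align*}
f(t+s)-f(t)=\int_t^{t+s}f'(\tau)\,d\tau \quad\text{in } B_1 ,
\end{align*}
so $\|f(t+s)-f(t)\|_{B_1}\le s^{1-1/p_1}\|f'\|_{L^{p_1}(0,T;B_1)}$ when $p_1>1$. When $p_1=1$ one only gets $\int_0^{T-s}\|f(t+s)-f(t)\|_{B_1}\,dt\le s\|f'\|_{L^1(0,T;B_1)}$, which is still enough for part (1). In either case, translations $\tau_sf-f\to0$ in $L^{p_0}(0,T-s;B_1)$ uniformly on bounded subsets of $W$; for $p_1=1$, $p_0>1$ this follows by interpolating the $L^1$ bound with the $L^{p_0}$ bound.

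For part (1), take a bounded family $F\subset W$ and apply Simon's criterion. The time averages $\frac1s\int_t^{t+s}f\,d\tau$ are bounded in $B_0$, hence relatively compact in $B$, for each fixed $t$ and $s$. Together with the uniform translation estimate in $B_1$ from the second step, this gives relative compactness of $F$ in $L^{p_0}(0,T;B_1)$. This is a Riesz--Fréchet--Kolmogorov-type argument for vector-valued functions: approximate each $f$ by its time average, which lies in a compact set uniformly in $f$. Finally, applying Ehrling pointwise in $t$ and integrating gives
\begin{align*}
\|f-g\|_{L^{p_0}(B)}\le \eta\, \|f-g\|_{L^{p_0}(B_0)}+C_\eta\|f-g\|_{L^{p_0}(B_1)} .
\end{align*}
The first term is at most $2\eta\sup_F\|\cdot\|_{L^{p_0}(B_0)}$ and the second is small along a Cauchy subsequence in $L^{p_0}(B_1)$. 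Hence the subsequence is Cauchy in $L^{p_0}(0,T;B)$.

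For part (2), the second step with $p_1>1$ shows that $F$ is uniformly equicontinuous in $C([0,T];B_1)$. Because $B_0$ is reflexive and $f$ is continuous into $B_1$, weak lower semicontinuity gives $\|f(t)\|_{B_0}\le \|f\|_{L^\infty(B_0)}$ for every $t$, not just almost every $t$. So $\{f(t):f\in F\}$ is bounded in $B_0$, hence relatively compact in $B$ and in $B_1$. Arzelà--Ascoli then gives a subsequence converging in $C([0,T];B_1)$, and the Ehrling inequality applied with sup norms upgrades this to convergence in $C([0,T];B)$.

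I expect the main obstacle to be part (1) in the endpoint cases $p_0=1$ or $p_1=1$. There, weak compactness in $L^{p_0}(B_0)$ is unavailable, so the translation/averaging argument has to be run carefully rather than relying on extracting a weakly convergent subsequence.
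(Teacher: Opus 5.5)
The paper gives no proof of this lemma; it only cites Aubin, Lions and Simon. Your outline is essentially Simon's proof (his Corollary~4). It combines Ehrling's inequality, Simon's averaging/translation criterion for relative compactness in $L^{p_0}(0,T;B_1)$, and Arzel\`a--Ascoli in $C([0,T];B_1)$ for part (2), each upgraded to $B$ by Ehrling. This is the right route for the statement as written. The classical Lions argument, based on weak compactness in reflexive spaces, does not reach the endpoint exponents $p_0=1$ or $p_1=1$ that the lemma includes. Simon's version also does not need reflexivity of $B_0$ or $B_1$, although your weak-lower-semicontinuity step in part (2) is legitimate under the stated hypotheses.

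One step is justified incorrectly. For $p_1=1<p_0<\infty$ you say that uniform smallness of $\tau_sf-f$ in $L^{p_0}(0,T-s;B_1)$ follows by interpolating the $L^1$ bound with the $L^{p_0}$ bound. It does not. Smallness in $L^1$ together with boundedness in $L^{p_0}$ gives smallness only in $L^q$ for $q<p_0$; the scalar functions $n^{1/p_0}\chi_{[0,1/n]}$ show this. You need a bound strictly above $L^{p_0}$, and it is available because $W\subset W^{1,1}(0,T;B_1)$. Writing $f(t)=f(t')+\int_{t'}^{t}f'$ and averaging in $t'$ gives
\begin{align*}
\sup_{t\in[0,T]}\|f(t)\|_{B_1}\le T^{-1}\|f\|_{L^1(0,T;B_1)}+\|f'\|_{L^1(0,T;B_1)},
\end{align*}
and hence
\begin{align*}
\|\tau_sf-f\|_{L^{p_0}(0,T-s;B_1)}\le \bigl(2\|f\|_{L^\infty(0,T;B_1)}\bigr)^{1-1/p_0}\bigl(s\|f'\|_{L^1(0,T;B_1)}\bigr)^{1/p_0}.
\end{align*}
The right-hand side is uniformly small on bounded subsets of $W$, and with this repair the rest of part (1) goes through.

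In part (2), also state explicitly that each $f\in W$ belongs to $C([0,T];B)$, since this is part of what the embedding statement asserts. It follows from the same Ehrling estimate, $\|f(t)-f(t')\|_B\le 2\eta M+C_\eta\|f(t)-f(t')\|_{B_1}$ with $M=\sup_t\|f(t)\|_{B_0}$.
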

\noindent For a proof of this lemma, we refer to, e.g., \cite{aubin-lions1,aubin-lions2,aubin-lions3}.

By applying Lemma $\ref{jinsixianyan1}-\ref{aubin}$, we will establish the following main theorem in this section, which concerns the existence of global solutions to the approximate problem $(\ref{a1})-(\ref{a3})$.

\begin{theorem}[Global Existence]\label{globalexistence}
	Assume that $h_0$ satisfies assumption $(\ref{jiashe})$. Then there exists a global weak solution $h$ in the sense of Definition \ref{def2} to problem $(\ref{a1})-(\ref{a3})$ such that, for any given $\kappa$,
	\begin{align}\label{kappajie1}
		\left\| h \right\|_{L^\infty (0,T; H^1(\Omega))} + \left\| h \right\|_{L^2 (0,T; H^3(\Omega))} \leq C_\kappa,
		\\\label{kappajie2}
		\left\| h_t \right\|_{L^{2}(0,T; H^{-1}(\Omega))} \leq C_\kappa.
	\end{align}
\end{theorem}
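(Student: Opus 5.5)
The proof follows the standard Galerkin compactness route. By Lemma \ref{jinsixianyan1} the bounds on $h^m$ are uniform in $m$ on $[0,t_m]$. The coefficients $g_{im}$ therefore stay bounded (the $H^1$ norm of $h^m-c$ controls $\sum_i\lambda_i g_{im}^2$), so each ODE solution of (\ref{changweifen}) extends to all of $[0,T]$.

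Next I upgrade the estimates to the $H^3$ bound in (\ref{kappajie1}). Since $|h^m|_\kappa\ge\kappa$, we have $|h^m_{xxx}|\le\kappa^{-3}\,\big||h^m|_\kappa^3h^m_{xxx}\big|$, so $h^m_{xxx}$ is bounded in $L^2(Q_T)$. Together with the bounds on $h^m$ in $L^\infty(0,T;H^1)$ and on $h^m_{xx}$ in $L^2(Q_T)$, this shows that $h^m$ is bounded in $L^2(0,T;H^3(\Omega))$ uniformly in $m$, with a constant depending on $\kappa$. Lemma \ref{jinsixianyan2} bounds $h^m_t$ in $L^2(0,T;H^{-1}(\Omega))$.

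I then extract a subsequence with
\begin{align*}
&h^m\rightharpoonup h \ \text{weakly in } L^2(0,T;H^3(\Omega)), \qquad h^m\overset{*}{\rightharpoonup} h \ \text{weakly-* in } L^\infty(0,T;H^1(\Omega)),\\
&h^m_t\rightharpoonup h_t \ \text{weakly in } L^2(0,T;H^{-1}(\Omega)).
\end{align*}
I apply Lemma \ref{aubin}(1) with $B_0=H^3$, $B=H^2$, $B_1=H^{-1}$ and $p_0=p_1=2$, which gives strong convergence in $L^2(0,T;H^2(\Omega))$. I also apply Lemma \ref{aubin}(2) with $B_0=H^1$, $B=C(\bar\Omega)$, $B_1=H^{-1}$, which gives $h^m\to h$ in $C([0,T];C(\bar\Omega))$. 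Lower semicontinuity of norms then yields (\ref{kappajie1})--(\ref{kappajie2}) for $h$. The trace condition $h=c$ passes to the limit by the uniform convergence, since $h^m-c\in H^1_0$.

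To pass to the limit in the weak form, I take test functions $\varphi=\eta(t)\omega_j(x)$ with $\eta\in C^\infty([0,T])$ and $\eta(T)=0$. After integrating by parts once in $x$ in the Galerkin identity, the boundary term vanishes because $\omega_j=0$ on $\partial\Omega$, and we get (\ref{ruojie2}) for $h^m$ with $\varphi$ of this form. The main obstacle is the nonlinear terms, for example $(h^m_{xx},(|h^m|_\kappa^3\varphi_x)_x)$. Since $s\mapsto|s|_\kappa^3$ is smooth and $h^m\to h$ uniformly, $|h^m|_\kappa^3\to|h|_\kappa^3$ uniformly. Moreover, $(|h^m|_\kappa^3)_x=3|h^m|_\kappa h^mh^m_x$ converges strongly in $L^2(Q_T)$, because $h^m_x\to h_x$ strongly there. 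Combined with the strong $L^2$ convergence of $h^m_{xx}$, every product converges. The lower-order terms are handled the same way. The initial term converges because $h^m_0\to h_0$ in $H^1$. Finally, linear combinations of $\eta(t)\omega_j(x)$ are dense, in $C^1([0,T];H^2\cap H^1_0)$, among the admissible test functions $\varphi$ of Definition \ref{def2}, and (\ref{ruojie2}) is continuous in $\varphi$ in that topology given the regularity of $h$. This yields (\ref{ruojie2}) for all admissible $\varphi$, and hence the theorem.
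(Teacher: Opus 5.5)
Your route is essentially the paper's: uniform Galerkin bounds, continuation to $[0,T]$, Aubin--Lions, then passage to the limit. The one change is the order of two steps. You integrate by parts at the Galerkin level and pass to the limit directly in the $h_{xx}$-form of $(\ref{ruojie2})$, using strong convergence in $L^2(0,T;H^2(\Omega))$. You then extend from $\eta(t)\omega_j(x)$ to all admissible $\varphi$ by density. The paper instead passes to the limit in the flux form $(\ref{jinsiruojie})$, via weak convergence of $h^m_{xxx}$ and uniform convergence of $|h^m|_\kappa^3$, and only then integrates by parts for the limit. Your explicit bound $|h^m_{xxx}|\le\kappa^{-3}\big||h^m|_\kappa^3h^m_{xxx}\big|$, the coefficient control for continuation, and the density step all fill in details the paper leaves implicit.

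The step that is wrong as written is the vanishing of the boundary term. Moving a derivative off $h^m_{xxx}$ in $\alpha_1(|h^m|_\kappa^3h^m_{xxx},\varphi_x)_{Q_T}$ produces $\alpha_1\int_0^T\big[\,|h^m|_\kappa^3h^m_{xx}\,\eta\,\omega_{jx}\big]_{a}^{d}\,dt$. For the sine eigenfunctions $\omega_{jx}\neq0$ at $a$ and $d$, so $\omega_j=0$ on $\partial\Omega$ does not remove this term. That fact only removes the boundary term when passing from $((\cdot)_x,\omega_j)$ to $-(\cdot,\omega_{jx})$, which yields the flux form, not $(\ref{ruojie2})$.

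What actually kills the term is a property of the trial functions: $h^m_{xx}=\sum_i g_{im}\omega_{ixx}=-\sum_i\lambda_i g_{im}\omega_i=0$ on $\partial\Omega$. This is exactly where condition $(\ref{a2b})$ enters. In $(\ref{ruojie2})$ the values of $\varphi_x$ on $\partial\Omega$ are free, so $h_{xx}=0$ is imposed only as a natural boundary condition. Your stated reason would apply verbatim to any basis vanishing on $\partial\Omega$, and for such a basis the boundary term need not vanish at the Galerkin level.

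With this correction your argument is complete. It then has a small advantage over the paper's: no boundary trace of $h_{xx}$ is needed for the limit $h$. The paper's final appeal to $(\ref{a2b})$ for $h$ implicitly uses $h^m_{xx}|_{\partial\Omega}=0$ together with convergence in $L^2(0,T;C^{2+\alpha}(\bar{\Omega}))$.
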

\begin{proof}
Based on Lemmas $\ref{jinsixianyan1}$ and $\ref{jinsixianyan2}$, we have
\begin{align}\label{yizhixing}
	\left\| h^m \right\|_{L^\infty (0,t; H^1(\Omega))} + \left\| h^m \right\|_{L^2 (0,t; H^3(\Omega))} + \left\| h_t^m \right\|_{L^{2}(0,t; H^{-1}(\Omega))} \leq C_\kappa,
\end{align}
where the constant $C_\kappa$ is independent of $m$. Because these estimates are independent of time, they guarantee that the local approximate solutions $h^m$ do not blow up in finite time, thereby allowing us to extend the solutions globally to the entire interval $[0, T]$. 

\noindent We now choose a sequence $h_0^m \in C^\infty(\bar{\Omega})$ such that
\begin{align}\label{000}
	\left\| h_0^m-h_0 \right\|_{H^1(\Omega)}\rightarrow 0.
\end{align}
To apply the Aubin-Lions lemma, we choose $p_0=2$, $p_1=2$, and
\begin{align*}
	B_0=H^3(\Omega), \quad B=C^{2+\alpha}(\bar{\Omega}), \quad B_1=H^{-1}(\Omega)
\end{align*}
for a suitable positive constant $\alpha$. It is easy to verify that $B_0$, $B$, $B_1$ satisfy the conditions of Lemma $\ref{aubin}$. Thus, $(\ref{yizhixing})$ implies that there exists a subsequence, which we still denote by $h^m$, and a limit function $h$, such that as $m\rightarrow \infty$,
\begin{align*}
	\left\| h^m-h \right\|_{L^2(0,t; C^{2+\alpha}(\bar{\Omega}))}\rightarrow 0.
\end{align*}
Furthermore, by choosing $B=H^2(\Omega)$, we obtain
\begin{align}\label{shoulianH2}
	\left\| h^m-h \right\|_{L^2(0,t; H^2(\Omega))}\rightarrow 0.
\end{align}
Next, choosing $p_0=\infty, p_1=2$, and
\begin{align*}
	B_0=H^1(\Omega), \quad B=C^{\alpha}(\bar{\Omega}), \quad B_1=H^{-1}(\Omega),
\end{align*}
we conclude that
\begin{align}\label{shoulianwuqiong}
	\left\| h^m - h \right\|_{C([0,t];C^{\alpha}(\bar{\Omega}))}\rightarrow 0.
\end{align}
It follows from $(\ref{yizhixing})$ that we can extract a subsequence satisfying
\begin{align}\label{h3x}
	h^m\overset{*}{\rightharpoonup} h \quad {\rm in}\quad L^\infty(0,t; H^1(\Omega)), \quad h_{xxx}^m\rightharpoonup h_{xxx} \quad {\rm in} \quad L^2(Q_t),
\end{align}
and the limit function $h$ naturally inherits the bounds
\begin{align*}
	&h\in L^\infty(0,t; H^1(\Omega)) \cap L^2(0,t; H^3(\Omega)),
	\\
	&h_t \in L^{2}(0,t;H^{-1}(\Omega)),
\end{align*}
which yield $(\ref{kappajie1})$ and $(\ref{kappajie2})$. 

\noindent Next, we derive convergence estimates for the nonlinear terms to show that $h$ is a solution of $(\ref{a1})$. We observe that
\begin{align*}
	&\left| |h^m|_{\kappa}^3 - |h|_{\kappa}^3 \right| 
	\nonumber\\
	&= \left| (\sqrt{|h^m|^2+\kappa^2})^3-(\sqrt{|h|^2+\kappa^2})^3 \right|
	\nonumber\\
	&= \left| \sqrt{|h^m|^2+\kappa^2}-\sqrt{|h|^2+\kappa^2} \right| \left| |h^m|^2+\kappa^2 +\sqrt{|h^m|^2+\kappa^2}\sqrt{|h|^2+\kappa^2} + |h|^2+\kappa^2 \right|
	\nonumber\\
	&\leq \left| h^m-h \right| \left| |h^m|^2+\kappa^2 +\sqrt{|h^m|^2+\kappa^2}\sqrt{|h|^2+\kappa^2} + |h|^2+\kappa^2 \right|
\end{align*}
and
\begin{align*}
	&\left| |h^m|_{\kappa}^2 - |h|_{\kappa}^2 \right|
	\nonumber\\
	&= \left| |h^m|^2-|h|^2 \right|
	\nonumber\\
	&\leq \left| h^m-h \right| \left| |h^m|+|h| \right|.
\end{align*}
Combined with the uniform convergence from $(\ref{shoulianwuqiong})$, we deduce
\begin{align}\label{shoulianL2}
	&\left\| |h^m|_{\kappa}^3-|h|_{\kappa}^3 \right\|_{L^{\infty}(Q_t)}
	\nonumber\\
	&\leq \left\| h^m-h \right\|_{L^{\infty}(Q_t)} \left|\left| |h^m|^2+\kappa^2 +\sqrt{|h^m|^2+\kappa^2}\sqrt{|h|^2+\kappa^2} + |h|^2+\kappa^2 \right|\right|_{L^{\infty}(Q_t)}\rightarrow 0
\end{align}
and
\begin{align}\label{shoulianL3}
	&\left\| |h^m|_{\kappa}^2-|h|_{\kappa}^2 \right\|_{L^{\infty}(Q_t)}
	\nonumber\\
	&\leq \left\| h^m-h \right\|_{L^{\infty}(Q_t)} \left|\left| |h^m|+|h| \right|\right|_{L^{\infty}(Q_t)}\rightarrow 0.
\end{align}
Combining $(\ref{h3x})$ with $(\ref{shoulianL2})$, one concludes that
\begin{align}\label{1111}
	|h^m|_{\kappa}^3 h_{xxx}^m \rightharpoonup |h|_{\kappa}^3 h_{xxx} \qquad {\rm weakly} \ {\rm in} \ L^2(Q_t).
\end{align}
Then, from $(\ref{shoulianH2})$ and $(\ref{shoulianL2})$, it follows that
\begin{align}\label{222}
	| h^m |_{\kappa}^3 h_{x}^m\rightarrow |h|_{\kappa}^3 h_{x} \qquad {\rm strongly} \ {\rm in} \ L^2(Q_t).
\end{align}
Finally, $(\ref{shoulianH2})$ and $(\ref{shoulianL3})$ imply that 
\begin{align}\label{333}
	| h^m |_{\kappa}^2 h_{x}^m\rightarrow |h|_{\kappa}^2 h_{x} \qquad {\rm strongly} \ {\rm in} \ L^{2}(Q_t).
\end{align}
Using $(\ref{000})$, $(\ref{1111})$, $(\ref{222})$ and $(\ref{333})$ to pass to limit in the Galerkin formulation, we arrived at
\begin{align}\label{jinsiruojie}
	&(h,\varphi_t)_{Q_{T}}+\alpha_{1} ( |h|_{\kappa}^3h_{xxx}, \varphi_{x})_{Q_{T}} 
	\nonumber\\
	=&\alpha_{2} (|h|_{\kappa}^3 h_{x}, \varphi_{x})_{Q_{T}} - \alpha_{3} (|h|_{\kappa}^2 h_{x}, \varphi_{x})_{Q_{T}}-(h_0,\varphi(0))_{\Omega}.
\end{align}
Finally, we integrate the second term on the left-hand side of $(\ref{jinsiruojie})$ by parts with respect to $x$. Because the boundary condition $(\ref{a2b})$ ensures $h_{xx} = 0$ on $\partial\Omega$, the boundary terms vanish, yielding
\begin{align*}
	&(h,\varphi_t)_{Q_{T}}-\alpha_{1} ( h_{xx}, (|h|_{\kappa}^3\varphi_{x})_{x})_{Q_{T}} 
	\nonumber\\
	=&\alpha_{2} (|h|_{\kappa}^3 h_{x}, \varphi_{x})_{Q_{T}} - \alpha_{3} (|h|_{\kappa}^2 h_{x}, \varphi_{x})_{Q_{T}}-(h_0,\varphi(0))_{\Omega}.
\end{align*}
This implies that $(\ref{ruojie2})$ holds. The proof of Theorem $\ref{globalexistence}$ is thus complete.
\end{proof}

\section{A priori estimates independent of $\kappa$}
In Section 2, the global existence of weak solutions depending on $\kappa$ was established. In this section, we derive \textit{a priori} estimates for problem $(\ref{a1})-(\ref{a3})$ that are uniform with respect to $\kappa\in (0,1]$ for any fixed $T>0$.
\begin{lema}
	There exists a constant $C$, independent of $\kappa$, such that for any $T<\infty$ and $t\in [0,T]$, the following estimates hold
	\begin{align}\label{4.3}
	    \left\|	h^{\kappa} \right\|_{L^\infty (0,t; H^1(\Omega))}&\leq C,
		\\\label{4.4}
		\left\| h^{\kappa}\right\|_{L^\infty(Q_{t})}&\leq C,
		\\\label{4.5}
		\left\| |h^{\kappa}|_{\kappa}^\frac{3}{2}|h^{\kappa}_{xxx}\right\|_{L^2(Q_{t})}&\leq C,
		\\\label{4.66}
		\left\| |h^{\kappa}|_{\kappa}^3|h^{\kappa}_{xxx}\right\|_{L^2(Q_{t})}&\leq C.
	\end{align}
\end{lema}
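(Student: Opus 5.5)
The plan is a weighted $H^1$ energy estimate for the solution $h^\kappa$ of (\ref{a1})--(\ref{a3}) given by Theorem \ref{globalexistence}. I write $h=h^\kappa$ and $J:=\alpha_{1}|h|_{\kappa}^3 h_{xxx}-\alpha_{2}|h|_{\kappa}^3h_x+\alpha_{3}|h|_{\kappa}^2h_x$, so that $h_t=-J_x$. The computation is done at the Galerkin level of Section 2, where it is rigorous, and then passed to the limit $m\to\infty$. Since $\kappa\le 1$, none of the constants below will depend on $\kappa$.

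\textbf{Step 1: basic energy identity.} Because $h=c$ on $\partial\Omega$, we have $h_t=0$ there, so
\begin{align*}
\frac12\frac{d}{dt}\int_\Omega h_x^2=-\int_\Omega h_{xx}h_t=\int_\Omega h_{xx}J_x=-\int_\Omega h_{xxx}J .
\end{align*}
The boundary term $[h_{xx}J]_{\partial\Omega}$ vanishes by (\ref{a2b}). This gives
\begin{align*}
\frac12\frac{d}{dt}\int_\Omega h_x^2+\alpha_1\int_\Omega|h|_\kappa^3h_{xxx}^2=\alpha_2\int_\Omega|h|_\kappa^3h_xh_{xxx}-\alpha_3\int_\Omega|h|_\kappa^2h_xh_{xxx}.
\end{align*}
Applying Young's inequality with weight $|h|_\kappa^{3}$ to both terms on the right, I absorb half of the dissipation and am left with $C\int_\Omega(|h|_\kappa^3+|h|_\kappa)h_x^2$.

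\textbf{Step 2: $L^\infty$ control.} Since $h-c\in H^1_0(\Omega)$, we have
\begin{align*}
\|h\|_{L^\infty}\le c+|\Omega|^{1/2}\|h_x\|_{L^2}.
\end{align*}
So (\ref{4.4}) follows from (\ref{4.3}). Moreover (\ref{4.66}) follows from (\ref{4.5}) and (\ref{4.4}), via
\begin{align*}
|h|_\kappa^3h_{xxx}=|h|_\kappa^{3/2}\cdot|h|_\kappa^{3/2}h_{xxx},\qquad |h|_\kappa\le \|h\|_{L^\infty}+1 .
\end{align*}
Everything therefore reduces to proving (\ref{4.3}) together with (\ref{4.5}).

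\textbf{Step 3: closing the estimate (main obstacle).} Inserting Step 2 directly into the leftover term of Step 1 gives a bound like $C(1+\|h_x\|^2)^{5/2}$. That Gronwall inequality is superlinear and only yields a local-in-time bound. To obtain a bound on all of $[0,T]$, I will treat the $\alpha_2$ term more carefully instead of applying Young's inequality to it crudely.

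Integrating by parts with $h_{xx}=0$ on $\partial\Omega$ gives
\begin{align*}
\int_\Omega|h|_\kappa^3h_xh_{xxx}=-\int_\Omega|h|_\kappa^3h_{xx}^2-3\int_\Omega|h|_\kappa h\,h_x^2h_{xx}.
\end{align*}
The first term on the right has a favourable sign. For the second term, write $h_x^2h_{xx}=\tfrac13(h_x^3)_x$ and integrate by parts once more. This produces an interior term $\int(|h|_\kappa h)_xh_x^3$ of lower order and a boundary term $c|c|_\kappa[h_x^3]_{\partial\Omega}$.

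The boundary term is the genuine difficulty, since $h_x$ is not prescribed on $\partial\Omega$. This is where the modified energy functional is needed. I would add to $\frac12\int h_x^2$ a lower-order correction, for instance a multiple of $\int_\Omega(h-c)^2$ or of $\int_\Omega G(h)$ with $G''$ chosen to cancel the boundary flux. Failing that, I would control $|h_x|^3$ at the boundary by the one-dimensional trace interpolation
\begin{align*}
\|h_x\|_{L^\infty}^2\le C\|h_x\|_{L^2}\|h_{xx}\|_{L^2}+C\|h_x\|_{L^2}^2.
\end{align*}
Near the boundary $|h|_\kappa\ge c/2$ holds on a time-dependent neighbourhood, so $\|h_{xx}\|_{L^2}$ and hence the boundary term can be absorbed into the good term $\alpha_2\int|h|_\kappa^3h_{xx}^2$ and the weighted dissipation.

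The goal is a differential inequality of the form
\begin{align*}
E'(t)+\tfrac{\alpha_1}{2}\int_\Omega|h|_\kappa^3h_{xxx}^2\le C(1+E(t)),
\end{align*}
with $E$ equivalent to $\|h\|_{H^1}^2$ up to constants. Gronwall's lemma, together with the bound $\|h_0^m\|_{H^1}\le C$ on the initial data, then gives (\ref{4.3}) and (\ref{4.5}) uniformly in $\kappa$. If linear growth cannot be reached, I would at least secure $E'\le C(1+E)^{p}$ with $p>1$ and look for an additional conserved or monotone quantity to close the argument.
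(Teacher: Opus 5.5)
There is a genuine gap: Steps 1 and 2 are correct, and reducing (\ref{4.4}) and (\ref{4.66}) to (\ref{4.3}) and (\ref{4.5}) is exactly what the paper does, but Step 3 does not produce the estimate; it only lists possible remedies. The missing idea is the gradient-flow structure of the flux. With $\Psi_{\kappa,c}$ as in (\ref{ziyounenggai}) one computes
\[
\Psi_{\kappa,c}'(y)=\alpha_2(y-c)-\alpha_3\big(\ln(y+|y|_\kappa)-\ln(c+|c|_\kappa)\big).
\]
Hence $\Psi_{\kappa,c}''(y)=\alpha_2-\alpha_3|y|_\kappa^{-1}$ and $\Psi_{\kappa,c}'(c)=0$; the linear term in (\ref{ziyounenggai}) exists only to enforce $\Psi_{\kappa,c}'(c)=0$. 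So $J=|h|_\kappa^3\,\partial_x\mu$ with $\mu:=\alpha_1h_{xx}-\Psi_{\kappa,c}'(h)$, and $\mu=0$ on $\partial\Omega$ by (\ref{a2}) and (\ref{a2b}). Testing the equation with $\mu$ rather than with $h_{xx}$ gives, with no boundary contribution,
\[
\frac{d}{dt}\int_\Omega\Big(\frac{\alpha_1}{2}h_x^2+\Psi_{\kappa,c}(h)\Big)dx=-\int_\Omega|h|_\kappa^{-3}J^2\,dx\le 0 ,
\]
which is (\ref{22.22}). No Gronwall argument is needed. Then (\ref{4.3}) follows from the lower bound on the functional. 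Likewise (\ref{4.5}) follows from $\int_0^t\int_\Omega|h|_\kappa^{-3}J^2\le C$ together with (\ref{4.12}), the two lower-order pieces of $|h|_\kappa^{-3/2}J$ being controlled by (\ref{4.3}) and (\ref{4.4}). Applying Young's inequality to the cross terms in Step 1 destroys exactly this perfect square. Adding $\frac{\alpha_2}{2}\int_\Omega(h-c)^2$ to $\frac{\alpha_1}{2}\int_\Omega h_x^2$, as you suggest, restores the square only for the $\alpha_2$-term. The $\alpha_3$-term still leaves $C\int_\Omega|h|_\kappa h_x^2\le C(1+E)^{3/2}$. Removing it requires the additional piece of $\Psi_{\kappa,c}$ whose second derivative is $-\alpha_3|h|_\kappa^{-1}$.

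The concrete route you sketch does not close either. After your second integration by parts, the interior term is $\alpha_2\int_\Omega\big(|h|_\kappa+h^2|h|_\kappa^{-1}\big)h_x^4$. It has the bad sign and the same scaling as the good term $\alpha_2\int_\Omega|h|_\kappa^3h_{xx}^2$, so it is not of lower order. The boundary term $|h_x|^3$ also cannot be absorbed linearly. The layer near $\partial\Omega$ where $|h|_\kappa\ge c/2$ has width controlled only through $\|h_x\|_{L^2}$. Even with the interpolation inequality you quote, Young's inequality leaves superlinear powers of $\|h_x\|_{L^2}$.

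If you want to keep Step 1, you can close it with a different ingredient. The proof of Lemma \ref{lm4.2} does not use the present lemma and yields (\ref{6.15}) and (\ref{D2}). In particular $\sup_t\|h^\kappa\|_{L^1}\le C$, because $|s|\le C(1+G_\kappa(s))$ uniformly in $\kappa$. By Gagliardo--Nirenberg, $\|h\|_{L^\infty}^3\le C\big(1+\|h_{xx}\|_{L^2}^{6/5}\big)$ is then integrable in time. Consequently $\frac{d}{dt}\|h_x\|_{L^2}^2\le C(1+\|h\|_{L^\infty}^3)\|h_x\|_{L^2}^2$ closes by the linear Gronwall lemma.

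The same entropy bound is also needed on the paper's route. For $y<0$ one has $y+|y|_\kappa=\kappa^2/(|y|+|y|_\kappa)$, so $-\alpha_3y\ln(y+|y|_\kappa)$ contains the term $-2\alpha_3|\ln\kappa|\,|y|$. Hence (\ref{4.7}) cannot hold with $\bar C$ independent of $\kappa$. The repair is $\int_\Omega(h^\kappa)^-\le C\kappa^2$, which follows from (\ref{6.15}) because $G_\kappa(s)\ge C^{-1}\kappa^{-2}|s|$ for $s<0$.
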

\begin{proof}
It is evident that $(\ref{4.4})$ follows directly from $(\ref{4.3})$ by the Sobolev embedding theorem. Furthermore, $(\ref{4.66})$ can be obtained by combining $(\ref{4.4})$ and $(\ref{4.5})$ with H\"older's inequality. Thus, we focus on proving $(\ref{4.3})$ and $(\ref{4.5})$. To this end, we define the following energy functional
\begin{align}
	\begin{split}\label{ziyounenggai}
		F_{\kappa,c}[h^{\kappa}]=&\int_{\Omega}\frac{\alpha_1}{2}|h^{\kappa}_{x}|^2+\Psi_{\kappa,c}(h^{\kappa})dx,
		\\
		\Psi_{\kappa,c}(h^{\kappa})=&\frac{\alpha_2}{2}|h^{\kappa}|^2-\alpha_{3}h^{\kappa}\ln\left(h^{\kappa}+|h^{\kappa}|_{\kappa}\right)
		\\
		&+\alpha_{3}|h^{\kappa}|_{\kappa}+\left(\alpha_{3}\ln\left(c+|c|_{\kappa}\right)-\alpha_{2}c\right)h^{\kappa}+\bar{C},
	\end{split}
\end{align}
where $\bar{C}$ is a sufficiently large positive constant chosen such that
\begin{align}\label{4.7}
	\Psi_{\kappa,c}(y)\geq\frac{\alpha_2}{4}y^2, \qquad \forall y\in\mathbb{R}, \kappa\in(0,1].
\end{align}
Direct calculation shows that the second derivative of $\Psi_{\kappa,c}$ is $\Psi_{\kappa,c}''(h) = \alpha_2 - \alpha_3 |h|_\kappa^{-1}$.
Using $(\ref{a1})-(\ref{a2b})$, and integrating by parts twice, we obtain
\begin{align}\label{22.22}
	\frac{d}{dt} F_{\kappa,c}[h^\kappa] &=\int_{\Omega}\alpha_1 h^{\kappa}_{x} h^{\kappa}_{xt} +\Psi_{\kappa,c}'(h^{\kappa})h^{\kappa}_t dx 
	\nonumber\\
	&=\int_{\Omega}(-\alpha_1 h^{\kappa}_{xx}+\Psi_{\kappa,c}'(h^{\kappa}))h^{\kappa}_t dx
	\nonumber\\
	&=\int_{\Omega}(\alpha_1 h^{\kappa}_{xx}-\Psi_{\kappa,c}'(h^{\kappa})) \left(\alpha_{1} |h^{\kappa}|_{\kappa}^3 h^{\kappa}_{xxx} -\alpha_{2} |h^{\kappa}|_{\kappa}^3 h^{\kappa}_{x} +\alpha_{3} |h^{\kappa}|_{\kappa}^2 h^{\kappa}_{x}\right)_{x} dx
	\nonumber\\
	&=-\int_{\Omega}(\alpha_1 h^{\kappa}_{xxx}-\Psi_{\kappa,c}''(h^{\kappa}) h^{\kappa}_{x}) \left(\alpha_{1} |h^{\kappa}|_{\kappa}^3 h^{\kappa}_{xxx} -\alpha_{2} |h^{\kappa}|_{\kappa}^3 h^{\kappa}_x +\alpha_{3} |h^{\kappa}|_{\kappa}^2 h^{\kappa}_{x}\right) dx
	\nonumber\\
	&=-\int_{\Omega}\frac{1}{|h^{\kappa}|_{\kappa}^3} \left(\alpha_{1} |h^{\kappa}|_{\kappa}^3 h^{\kappa}_{xxx} -\alpha_{2} |h^{\kappa}|_{\kappa}^3 h^{\kappa}_{x} +\alpha_{3} |h^{\kappa}|_{\kappa}^2 h^{\kappa}_{x}\right)^2 dx \leq 0.
\end{align}
Integrating $(\ref{22.22})$ over $[0, t]$ yields
\begin{align}\label{4.9}
	F_{\kappa,c}[h^{\kappa}]+\int_{0}^{t}\int_{\Omega}\left(\alpha_{1} |h^{\kappa}|_{\kappa}^{\frac{3}{2}} h^{\kappa}_{xxx} -\alpha_{2} |h^{\kappa}|_{\kappa}^{\frac{3}{2}} h^{\kappa}_{x} +\alpha_{3} |h^{\kappa}|_{\kappa}^{\frac{1}{2}}  h^{\kappa}_{x}\right)^2 dxd\tau=F_{\kappa,c}[h_0].
\end{align}
It follows from the condition $h_0\in H^1(\Omega)$, $(\ref{ziyounenggai})$ and the Sobolev embedding theorem that there exists a constant $C$ such that
\begin{align}\label{4.10}
	F_{\kappa,c}[h^{\kappa}]\leq F_{\kappa,c}[h_0] \leq C.
\end{align}
Together with $(\ref{4.7})$, one arrives at $(\ref{4.3})$.

\noindent In what follows, we give the proof of $(\ref{4.5})$. For this aim, we introduce a useful inequality. There exist positive constants $\lambda, \mu, \nu$ such that
\begin{equation}\label{4.12}
	\lambda a^2 \leq (a+b+c)^2+\mu b^2+\nu c^2, \qquad \forall \ a,b,c\in\mathbb{R}.
\end{equation}
From $(\ref{ziyounenggai})-(\ref{4.10})$, we obtain
\begin{equation}\label{4.11}
	\int_{0}^{t}\int_{\Omega}\left(\alpha_{1} |h^{\kappa}|_{\kappa}^{\frac{3}{2}} h^{\kappa}_{xxx} -\alpha_{2} |h^{\kappa}|_{\kappa}^{\frac{3}{2}} h^{\kappa}_{x} +\alpha_{3} |h^{\kappa}|_{\kappa}^{\frac{1}{2}}  h^{\kappa}_{x}\right)^2 dxd\tau\leq C.
\end{equation}
By $(\ref{4.3})$ and $(\ref{4.4})$, the lower-order terms satisfy
\begin{equation}\label{4.13}
	\int_{0}^{t}\int_{\Omega} \left( \alpha_{2} |h^{\kappa}|_{\kappa}^{\frac{3}{2}} h^{\kappa}_{x} \right)^2 dxd\tau \leq C \| \, |h^{\kappa}|+\kappa \, \|_{L^\infty(Q_{t})}^3 \int_{0}^{t}\int_{\Omega} |h^{\kappa}_x|^2 dxd\tau \leq C
\end{equation}
and
\begin{equation}\label{4.14}
	\int_{0}^{t}\int_{\Omega} \left( \alpha_{3} |h^{\kappa}|_{\kappa}^{\frac{1}{2}} h^{\kappa}_{x} \right)^2 dxd\tau \leq C \| \, |h^{\kappa}|+\kappa \, \|_{L^\infty(Q_{t})} \int_{0}^{t}\int_{\Omega} |h^{\kappa}_x|^2 dxd\tau \leq C.
\end{equation}
Applying inequality $(\ref{4.12})$ to the estimates $(\ref{4.11})-(\ref{4.14})$, we obtain $(\ref{4.5})$. This completes the proof of this lemma.
\end{proof}

To obtain higher-order spatial regularity, we construct a Bernis-Friedman type entropy functional. This allows us to establish a uniform estimate for the second-order spatial derivative.
\begin{lema}\label{lm4.2}
	There exists a constant $C$ independent of $\kappa$, such that for any $t\in[0,T]$ and $h_0$ satisfying $(\ref{jiashe})$, there holds
	\begin{align}\label{D2}
		\left\| h_{xx}^{\kappa} \right\|_{L^2(Q_t)} \leq C.
	\end{align}
\end{lema}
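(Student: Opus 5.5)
The plan is a Bernis--Friedman entropy argument, with the entropy normalized at the boundary value $c$ so that the boundary flux term vanishes. Define
\begin{align*}
G_{\kappa}(y)=\int_{c}^{y}\int_{c}^{s}\frac{dr\,ds}{|r|_{\kappa}^{3}},
\end{align*}
so that $G_\kappa''(y)=|y|_\kappa^{-3}$, $G_\kappa(c)=G_\kappa'(c)=0$ and $G_\kappa\ge 0$.

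\textbf{Step 1: initial entropy and lower bound.} I will check that $G_\kappa(y)\le C(1+1/y)$ for $y>0$, uniformly in $\kappa\in(0,1]$. Together with $(\ref{jiashe})$ this gives $\int_\Omega G_\kappa(h_0)\,dx\le C$. I will also check that, for $y$ near $0$,
\begin{align*}
G_\kappa(y)\ge \frac{c_0}{|y|_\kappa}-C,
\end{align*}
so that the entropy controls $\int_\Omega |h^\kappa|_\kappa^{-1}dx$.

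\textbf{Step 2: entropy identity.} Write $J=\alpha_{1} |h^{\kappa}|_{\kappa}^3 h^{\kappa}_{xxx} -\alpha_{2} |h^{\kappa}|_{\kappa}^3 h^{\kappa}_{x} +\alpha_{3} |h^{\kappa}|_{\kappa}^2 h^{\kappa}_{x}$. Then
\begin{align*}
\frac{d}{dt}\int_\Omega G_\kappa(h^\kappa)\,dx=-\big[G_\kappa'(h^\kappa)J\big]_{a}^{d}+\int_\Omega \frac{h^\kappa_x}{|h^\kappa|_\kappa^3}\,J\,dx .
\end{align*}
The boundary bracket vanishes because $h^\kappa=c$ on $\partial\Omega$ and $G_\kappa'(c)=0$; this is exactly why the entropy is normalized at $c$. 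In the remaining integral I integrate $\alpha_1\int h^\kappa_x h^\kappa_{xxx}$ by parts. By $(\ref{a2b})$ its boundary term $[h^\kappa_xh^\kappa_{xx}]$ vanishes, which gives
\begin{align*}
\frac{d}{dt}\int_\Omega G_\kappa(h^\kappa)+\alpha_1\int_\Omega |h^\kappa_{xx}|^2=-\alpha_2\int_\Omega |h^\kappa_x|^2+\alpha_3\int_\Omega\frac{|h^\kappa_x|^2}{|h^\kappa|_\kappa}.
\end{align*}
The $\alpha_2$ term is harmless by $(\ref{4.3})$. These manipulations are justified for the regularized solution of Theorem \ref{globalexistence}, or at the Galerkin level followed by passing to the limit.

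\textbf{Step 3: the main obstacle, the $\alpha_3$ term.} This term is not uniformly bounded in $\kappa$ a priori. I set $\phi(y)=\ln(y+|y|_\kappa)$, which satisfies $\phi'=1/|y|_\kappa$, and integrate by parts:
\begin{align*}
\int_\Omega\frac{|h^\kappa_x|^2}{|h^\kappa|_\kappa}=\big[h^\kappa_x\phi(c)\big]_a^d-\int_\Omega h^\kappa_{xx}\phi(h^\kappa).
\end{align*}
For the boundary term, $\phi(c)$ is bounded uniformly in $\kappa$. The Gagliardo--Nirenberg trace bound $|h^\kappa_x|_{\partial\Omega}\le C\|h^\kappa\|_{H^1}^{1/2}\|h^\kappa\|_{H^2}^{1/2}$, combined with $(\ref{4.3})$, gives $\varepsilon\|h^\kappa_{xx}\|_{L^2}^2+C_\varepsilon$. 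For the interior term, Young's inequality gives $\varepsilon\|h^\kappa_{xx}\|^2+C_\varepsilon\int\phi(h^\kappa)^2$. Since $|h^\kappa|$ is bounded by $(\ref{4.4})$, the only danger is $\phi\approx\ln(\kappa^2/(2|h|))$ when $h<0$, or $\ln|h|_\kappa$ near zero. In both cases $\phi^2\le C(1+|h^\kappa|_\kappa^{-1})$, so Step 1 bounds it by $C(1+\int G_\kappa(h^\kappa))$. I expect this logarithmic estimate on the negative side, uniform in $\kappa$, to need the most care. If the logarithm is too large for $h<0$ on the scale $\kappa^2$, I will compare it against the explicit growth of $G_\kappa$ there, which is of order $|y|/\kappa^{3}$ and so dominates.

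\textbf{Step 4: conclusion.} Choosing $\varepsilon$ small absorbs the $\|h^\kappa_{xx}\|^2$ terms, leaving
\begin{align*}
\frac{d}{dt}\int G_\kappa+\frac{\alpha_1}{2}\int|h^\kappa_{xx}|^2\le C\Big(1+\int G_\kappa\Big).
\end{align*}
Gronwall's inequality on $[0,T]$ then yields $\sup_t\int G_\kappa(h^\kappa)\le C$ and $(\ref{D2})$, with $C$ independent of $\kappa$.
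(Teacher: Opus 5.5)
Your proposal follows the paper's proof essentially step for step: the same entropy $G_\kappa$ normalized at $c$, the same entropy identity, the same integration by parts against $\ln(h^\kappa+|h^\kappa|_\kappa)$ followed by Young's inequality, and Gronwall closed by the key bound $\ln^2(s+|s|_\kappa)\le C(1+G_\kappa(s))$; your trace estimate for the boundary term does the same job as the paper's term $\ln(c+|c|_\kappa)\int_\Omega h^\kappa_{xx}$. One correction in Step 3: for $y<0$ the claimed bound $\phi^2\le C(1+|y|_\kappa^{-1})$ is false when $|y|\sim 1$, since there $\phi\approx 2\ln\kappa$, so your fallback is the actual argument, and it works because $\phi(y)=-\ln X$ with $X=(|y|+|y|_\kappa)/\kappa^2\ge 1$, while $G_\kappa(y)=\kappa^{-2}\bigl(|y|_\kappa-cy/|c|_\kappa\bigr)-|c|_\kappa^{-1}\ge c_1X-c^{-1}$ (growth of order $|y|/\kappa^2$, not $|y|/\kappa^3$), so $\ln^2X\le C(1+G_\kappa(y))$.
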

\begin{proof}
	We introduce the following entropy functions
	\begin{align}\label{6.2}
		G_\kappa(s)=\int_{c}^{s}g_\kappa(r) dr, \qquad
		g_\kappa(s)=\int_{c}^{s}\frac{1}{|r|_{\kappa}^3} dr,
	\end{align}
	where $c>0$ is the constant given in $(\ref{a2})$. It is easy to verify that
	\begin{align}\label{6.3b}
		&G_\kappa'(s)=g_\kappa(s),\quad  G_\kappa''(s)=g_\kappa'(s)=\frac{1}{|s|_{\kappa}^3},
		\\ \label{6.5}
		&0 \leq G_\kappa(s)\leq G_0(s), \quad \forall s\in \mathbb{R},
	\end{align}
	where $G_0=\lim\limits_{\kappa\rightarrow0}G_\kappa$. Integrating by parts, we calculate that
	\begin{align}\label{G}
		G_\kappa(s)=&\int_{s}^{c}\int_{t}^{c}\frac{1}{|r|_{\kappa}^3}drdt
		\nonumber\\
		=&
		\frac{s}{\kappa^2}\left( \frac{s}{\sqrt{s^2+\kappa^2}}-\frac{c}{\sqrt{c^2+\kappa^2}} \right) + \frac{1}{\sqrt{s^2+\kappa^2}}-\frac{1}{\sqrt{c^2+\kappa^2}}
		\nonumber\\
		=& \frac{s^3}{\sqrt{c^2+\kappa^2}\sqrt{s^2+\kappa^2}\left( \sqrt{c^2+\kappa^2}s+c\sqrt{s^2+\kappa^2} \right)}
		\nonumber\\
		&-\frac{c^2s}{\sqrt{c^2+\kappa^2}\sqrt{s^2+\kappa^2}\left( \sqrt{c^2+\kappa^2}s+c\sqrt{s^2+\kappa^2} \right)} 
		\nonumber\\
		&+ \frac{1}{\sqrt{s^2+\kappa^2}}-\frac{1}{\sqrt{c^2+\kappa^2}},
	\end{align}
	and taking the limit yields
	\begin{align}\label{g}
		G_0(s)=\left\{
		\begin{aligned}
			&\frac{s}{2c^2}+\frac{1}{2s}-\frac{1}{c}, \quad &s > 0,\\
			&\infty, \quad &s\leq 0.
		\end{aligned}\right.
	\end{align}
	Multiplying equation $(\ref{a1})$ by $g_\kappa(h^{\kappa})$, integrating over $Q_t$, and applying integration by parts, with $(\ref{a2})$ and $(\ref{6.3b})$, we obtain
	\begin{align}\label{6.7}
		&\int_{\Omega}G_\kappa(h^{\kappa})dx+\int_{0}^{t}\int_{\Omega}\alpha_{1}|h_{xx}^{\kappa}|^2+\alpha_{2}|h_{x}^{\kappa}|^2dxd\tau
		\nonumber\\
		=&\int_{\Omega}G_\kappa(h_0(x))dx+\int_{0}^{t}\int_{\Omega}\alpha_{3}\frac{1}{|h^{\kappa}|_\kappa}|h_{x}^{\kappa}|^2dxd\tau.
	\end{align}
	From $h_0\in H^1(\Omega)$, $(\ref{jiashe})$, $(\ref{6.5})$ and $(\ref{g})$, we have
	\begin{align}
		\int_{\Omega}G_\kappa(h_0(x))dx\leq \int_{\Omega}G_{0}(h_0(x))dx \leq C.
	\end{align}
	To control the last term in $(\ref{6.7})$, we integrate by parts and apply Young's inequality and $(\ref{a2})$, we obtain
    \begin{align}\label{2.6.11}
    	&\int_{0}^{t}\int_{\Omega}\alpha_{3}\frac{|h_x^{\kappa}|^2}{|h^{\kappa}|_{\kappa}}dxd\tau
    	\nonumber\\
    	=&\alpha_{3}\int_{0}^{t}\int_{\Omega}  \left( \ln\left(h^\kappa + |h^{\kappa}|_{\kappa}\right) \right)_{x} h^{\kappa}_x dxd\tau
    	\nonumber\\
    	=&-\alpha_{3}\int_{0}^{t}\int_{\Omega} \ln\left(h^\kappa + |h^{\kappa}|_{\kappa}\right) h^{\kappa}_{xx} dxd\tau +\alpha_{3} \ln\left(c + |c|_{\kappa}\right) \int_{0}^{t} \int_{\Omega} h^{\kappa}_{xx}dx d\tau
    	\nonumber\\
    	\leq& \int_{0}^{t}\int_{\Omega}\frac{\alpha_1}{2}|h^{\kappa}_{xx}|^2+C_{\alpha_1}\left(1+\ln^2\left(h^\kappa + |h^{\kappa}|_{\kappa}\right)\right) dxd\tau.
    \end{align}
	Combining $(\ref{6.7})-(\ref{2.6.11})$ yields
	\begin{align}\label{guocheng}
		\int_{\Omega}G_\kappa(h^{\kappa})dx + \int_{0}^{t}\int_{\Omega}\frac{\alpha_{1}}{2}|h_{xx}^{\kappa}|^2 dxd\tau \leq C\left(1+\int_{0}^{t}\int_{\Omega}\ln^2\left(h^\kappa + |h^{\kappa}|_{\kappa}\right)dxd\tau \right).
	\end{align}
	Notice that $G_\kappa(s)$ is coercive, and there exists a constant $C>0$ independent of $\kappa$ such that $\ln^2(s + \sqrt{s^2+\kappa^2}) \leq C(1 + G_\kappa(s))$ for all $s \in \mathbb{R}$. Together with $(\ref{guocheng})$, we yield
	\begin{align*}
		\int_{\Omega}G_\kappa(h^{\kappa})dx + \int_{0}^{t}\int_{\Omega}\frac{\alpha_1}{2}|h_{xx}^{\kappa}|^2dxd\tau \leq C\left(1+\int_{0}^{t}\int_{\Omega}G_\kappa(h^{\kappa})dxd\tau \right).
	\end{align*}
	Applying Gronwall's lemma, we arrive at
	\begin{align}\label{6.15}
		\int_{\Omega}G_\kappa(h^{\kappa})dx \leq C.
	\end{align}
	Consequently, returning to the energy equality yields
	\begin{align*}
		\int_{0}^{t}\int_{\Omega}|h^{\kappa}_{xx}|^2dxd\tau \leq C.
	\end{align*}
	This establishes $(\ref{D2})$, which completes the proof of this lemma.
\end{proof}

Finally, we derive uniform bounds for the time derivatives using duality arguments and the previously obtained spatial estimates.
\begin{lema}
	There exists a constant $C$, independent of $\kappa$, such that for any $t\in[0,T]$, the following estimates hold
	\begin{align}\label{lemma4.2}
		\left\|h^{\kappa}_t\right\|_{L^2(0,t;H^{-1}(\Omega))}\leq C,
		\\\label{4.22}
		\left\|h^{\kappa}_{xt}\right\|_{L^2(0,t;H^{-2}(\Omega))}\leq C.
	\end{align}
\end{lema}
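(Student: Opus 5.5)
We argue by duality, testing equation $(\ref{a1})$ against functions vanishing on $\partial\Omega$ and using only the $\kappa$-independent bounds $(\ref{4.3})$--$(\ref{4.66})$ of the preceding lemmas. Write the equation in divergence form, $h^\kappa_t=-J^\kappa_x$, with flux
\begin{align*}
J^\kappa:=\alpha_{1}|h^\kappa|_\kappa^3h^\kappa_{xxx}-\alpha_{2}|h^\kappa|_\kappa^3h^\kappa_x+\alpha_{3}|h^\kappa|_\kappa^2h^\kappa_x .
\end{align*}
For $\phi\in H^1_0(\Omega)$ and a.e. $\tau$, integration by parts gives $\langle h^\kappa_t,\phi\rangle=(J^\kappa,\phi_x)_\Omega$. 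The boundary term vanishes because $\phi=0$ on $\partial\Omega$. At the Galerkin level the same identity holds by construction, since $h^m-c$ lies in the span of the $\omega_i$, which vanish on $\partial\Omega$. Hence $\|h^\kappa_t(\tau)\|_{H^{-1}(\Omega)}\le\|J^\kappa(\tau)\|_{L^2(\Omega)}$, and
\begin{align*}
\|h^\kappa_t\|_{L^2(0,t;H^{-1}(\Omega))}\le\|J^\kappa\|_{L^2(Q_t)}.
\end{align*}

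The key step is to bound $\|J^\kappa\|_{L^2(Q_t)}$ uniformly in $\kappa$. The first term is controlled by $(\ref{4.66})$. For the two lower-order terms, $(\ref{4.4})$ together with $\kappa\le1$ gives $\||h^\kappa|_\kappa\|_{L^\infty(Q_t)}\le C$. Then $(\ref{4.3})$ yields
\begin{align*}
\||h^\kappa|_\kappa^3h^\kappa_x\|_{L^2(Q_t)}+\||h^\kappa|_\kappa^2h^\kappa_x\|_{L^2(Q_t)}\le C\,t^{1/2}\|h^\kappa\|_{L^\infty(0,t;H^1(\Omega))}\le C.
\end{align*}
Alternatively, the energy identity $(\ref{4.9})$ already bounds $\||h^\kappa|_\kappa^{-3/2}J^\kappa\|_{L^2(Q_t)}$, and multiplying by $\||h^\kappa|_\kappa^{3/2}\|_{L^\infty}\le C$ gives the same conclusion directly. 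Either route proves $(\ref{lemma4.2})$.

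For $(\ref{4.22})$, note that $\partial_x$ maps $H^{-1}(\Omega)$ boundedly into $H^{-2}(\Omega)$. Indeed, for $\psi\in H^2_0(\Omega)$ we have $\psi_x\in H^1_0(\Omega)$ with $\|\psi_x\|_{H^1}\le\|\psi\|_{H^2}$, so
\begin{align*}
\langle h^\kappa_{xt},\psi\rangle=-\langle h^\kappa_t,\psi_x\rangle\le\|h^\kappa_t\|_{H^{-1}}\|\psi\|_{H^2_0}.
\end{align*}
Equivalently, $\langle h^\kappa_{xt},\psi\rangle=-(J^\kappa,\psi_{xx})_\Omega$, which avoids any issue with the nonzero boundary value $c$. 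Integrating in time gives $\|h^\kappa_{xt}\|_{L^2(0,t;H^{-2})}\le\|J^\kappa\|_{L^2(Q_t)}\le C$.

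The only real obstacle is rigor in the duality identity, since $h^\kappa$ is merely a weak solution. I would handle it either by deriving the bounds for the Galerkin approximations $h^m$ and passing to the weak limit, using lower semicontinuity of the norms, or by reading the identity off the weak formulation $(\ref{jinsiruojie})$ with test functions $\varphi(t,x)=\eta(t)\phi(x)$. In both cases the constant depends only on $T$, $\alpha_i$, $c$ and $F_{\kappa,c}[h_0]$, so it is independent of $\kappa$.
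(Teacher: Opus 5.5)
Your proposal is correct and is essentially the paper's own argument. By duality against $\varphi\in L^2(0,t;H^1_0(\Omega))$ and $\psi\in L^2(0,t;H^2_0(\Omega))$, both estimates reduce to bounding $\|J^\kappa\|_{L^2(Q_t)}$, and the paper does this with $(\ref{4.66})$ for the leading term and $(\ref{4.3})$, $(\ref{4.4})$ for the lower-order terms. Your shortcut via $(\ref{4.9})$ is also valid. One small inaccuracy in your rigor remark: at the Galerkin level, $\langle h^m_t,\phi\rangle=(J^m,\phi_x)$ holds only for $\phi$ in the span of the $\omega_i$, not for all of $H^1_0(\Omega)$. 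The $H^{-1}$ bound still survives, because projection onto the Dirichlet eigenbasis is a contraction in $H^1_0(\Omega)$, and your other option of reading the identity off $(\ref{jinsiruojie})$ avoids the issue entirely.
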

\begin{proof}
	Utilizing $(\ref{4.3})$, $(\ref{4.4})$ and $(\ref{4.66})$, we integrate by parts and apply H\"older's inequality to obtain
	\begin{equation}
		\begin{split}\label{bei23}
			&|(h^{\kappa}_t, \varphi)_{Q_{t}}|
			\\
			=& \ \left|\left(\alpha_{1} |h^{\kappa}|_{\kappa}^3 h^{\kappa}_{xxx} -\alpha_{2} |h^{\kappa}|_{\kappa}^3 h^{\kappa}_x +\alpha_{3} |h^{\kappa}|_{\kappa}^2 h^{\kappa}_x,\varphi_x\right)_{Q_{t}}\right|
			\\
			\leq& \ \left|\left| \ \alpha_{1} |h^{\kappa}|_{\kappa}^3 h^{\kappa}_{xxx} -\alpha_{2} |h^{\kappa}|_{\kappa}^3 h^{\kappa}_x +\alpha_{3} |h^{\kappa}|_{\kappa}^2 h^{\kappa}_x \ \right|\right|_{L^2(Q_{t})}\left\|\varphi_x\right\|_{L^2(Q_{t})}
			\\
			\leq& \ \left(\alpha_{1}\left|\left| |h^{\kappa}|_{\kappa}^3 h^{\kappa}_{xxx}\right|\right|_{L^2(Q_{t})} + \alpha_{2}\left|\left| |h^{\kappa}|_{\kappa}^3 h^{\kappa}_x\right|\right|_{L^2(Q_{t})} +\alpha_{3}\left|\left| |h^{\kappa}|_{\kappa}^2 h^{\kappa}_x\right|\right|_{L^2(Q_{t})}\right)\left\|\varphi_x\right\|_{L^2(Q_{t})}
			\\
			\leq& \ C\left(1 + \left|\left| \, |h^{\kappa}| +\kappa \, \right|\right|_{L^\infty(Q_{t})}^3 \left\|h^{\kappa}_x\right\|_{L^2(Q_{t})} +\left|\left|\, |h^{\kappa}| +\kappa \,\right|\right|_{L^\infty(Q_{t})}^2 \left\|h^{\kappa}_x\right\|_{L^2(Q_{t})}\right) \left\|\varphi_x\right\|_{L^2(Q_{t})}
			\\
			\leq& \ C \left\|\varphi\right\|_{L^2(0,t;H_0^1(\Omega))}
		\end{split}
	\end{equation}
	for all test functions $\varphi \in L^2(0,t;H_0^1(\Omega))$. Similarly, we have 
	\begin{equation}
		\begin{split}\label{shijiandao2}
			&|(h^{\kappa}_{xt}, \psi)_{Q_{t}}|
			\\
			=& \ \left|\left(\alpha_{1} |h^{\kappa}|_{\kappa}^3 h^{\kappa}_{xxx} -\alpha_{2} |h^{\kappa}|_{\kappa}^3 h^{\kappa}_x +\alpha_{3} |h^{\kappa}|_{\kappa}^2 h^{\kappa}_x,\psi_{xx}\right)_{Q_{t}}\right|
			\\
			\leq& \ C\left(1 + \left|\left| \, |h^{\kappa}| +\kappa \, \right|\right|_{L^\infty(Q_{t})}^3 \left\|h^{\kappa}_x\right\|_{L^2(Q_{t})} +\left|\left|\, |h^{\kappa}| +\kappa \,\right|\right|_{L^\infty(Q_{t})}^2 \left\|h^{\kappa}_x\right\|_{L^2(Q_{t})}\right) \left\|\psi_{xx}\right\|_{L^2(Q_{t})}
			\\
			\leq& \ C \left\|\psi\right\|_{L^2(0,t;H_0^2(\Omega))}
		\end{split}
	\end{equation}
	for all test functions $\psi \in L^2(0,t;H_0^2(\Omega))$. Inequality $(\ref{bei23})$ directly proves $(\ref{lemma4.2})$, and $(\ref{shijiandao2})$ proves $(\ref{4.22})$. This completes the proof of the lemma.
\end{proof}

\section{Nonnegativity and existence of the original problem}
In this section, we utilize the \textit{a priori} estimates independent of $\kappa$, established in Section 3, to investigate the convergence of $h^\kappa$ as $\kappa \to 0$. We then prove the nonnegativity of the limit function $h$, thereby establishing the existence of solutions to the original problem $(\ref{1})-(\ref{3})$.

It follows from $(\ref{4.3})$ and $(\ref{D2})$ that
\begin{align*}
	\left\|h^{\kappa}\right\|_{L^2(0,T;H^2(\Omega))}\leq C.
\end{align*}
Hence, we can extract a subsequence $\{h^{\kappa_n}\}$, still denoted by $h^\kappa$, and find a function $h \in L^2(0,T;H^2(\Omega))$ such that
\begin{align}\label{fuzhu}
	h^{\kappa} \rightharpoonup h \quad \text{weakly in } L^2(0,T;H^2(\Omega)).
\end{align}

\begin{lema}\label{shoulianlema}
	Let $0<\alpha<\frac{1}{2}$. There exists a subsequence $\kappa_n\rightarrow0$ and a function $h\in C([0,T];C^{\alpha}(\overline{\Omega}))$ with
	\begin{align}\label{5.1}
		h&\in L^\infty(0,T;H^1(\Omega))\cap L^2(0,T;H^2(\Omega)),
		\\ \label{5.2}
		h_t&\in L^{2}(0,T;H^{-1}(\Omega)),
	\end{align}
	such that the sequence $h^{\kappa_n}$, still denoted by $h^{\kappa}$, satisfies
	\begin{align}\label{5.3}
		\left\|h^\kappa- h\right\|_{C([0,T];C^{\alpha}(\overline{\Omega}))}& \quad \rightarrow \quad 0,
		\\\label{hx}
		\left\|h_x^\kappa- h_x\right\|_{L^2(0,T;C^{\alpha}(\overline{\Omega}))}& \quad \rightarrow \quad 0,
		\\\label{5.4}
		\left|\left| \, |h^\kappa|^3h_x^\kappa - |h|^3h_x \, \right|\right|_{L^2(Q_T)}& \quad \rightarrow \quad 0,
		\\\label{5.5}
		\left|\left| \, |h^\kappa|^2h_x^\kappa - |h|^2h_x \, \right|\right|_{L^2(Q_T)}& \quad \rightarrow \quad 0,
		\\\label{weak2}
		|h^\kappa|_{\kappa}^3h_{xx}^\kappa \rightharpoonup |h|^3h_{xx}& \quad weakly \ \ in \ \ L^2(Q_T),
		\\\label{weak1}
		|h^\kappa|_{\kappa}h^{\kappa}h_{x}^{\kappa}h_{xx}^\kappa \rightharpoonup |h|h h_{x} h_{xx}& \quad weakly \ \ in \ \ L^1(Q_T).
	\end{align}
\end{lema}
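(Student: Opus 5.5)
The plan is to apply the Aubin--Lions Lemma \ref{aubin} to the $\kappa$-uniform bounds of Section 3 and then pass to the limit in the nonlinear terms. Weak convergence is used for the highest-order factor and strong convergence for everything else.

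\textbf{Compactness of $h^\kappa$.} By (\ref{4.3}) and (\ref{lemma4.2}), $h^\kappa$ is bounded in $L^\infty(0,T;H^1(\Omega))$ and $h^\kappa_t$ is bounded in $L^2(0,T;H^{-1}(\Omega))$.
\begin{itemize}
\item Take $p_0=\infty$, $p_1=2$, $B_0=H^1(\Omega)$, $B=C^{\alpha}(\overline\Omega)$ with $\alpha<\frac12$ (compact in one dimension) and $B_1=H^{-1}(\Omega)$. Part (2) of Lemma \ref{aubin} then gives a subsequence satisfying (\ref{5.3}).
\item For (\ref{hx}), use the bound of $h^\kappa_x$ in $L^2(0,T;H^1(\Omega))$, which follows from (\ref{4.3}) and (\ref{D2}). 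Use also the bound of $h^\kappa_{xt}$ in $L^2(0,T;H^{-2}(\Omega))$ from (\ref{4.22}). Apply part (1) with $B_0=H^1$, $B=C^\alpha$, $B_1=H^{-2}$.
\item The regularity (\ref{5.1})--(\ref{5.2}) follows from weak-$*$ lower semicontinuity together with (\ref{fuzhu}).
\item Since the limits are unique in the distributional sense, all the subsequences can be chosen compatibly.
\end{itemize}

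\textbf{Lower-order nonlinear terms.} By (\ref{5.3}), $h^\kappa\to h$ uniformly on $\overline{Q_T}$, with $\|h^\kappa\|_{L^\infty}\le C$ by (\ref{4.4}). Hence $|h^\kappa|^3\to|h|^3$ and $|h^\kappa|^2\to|h|^2$ uniformly. Moreover $\big||h^\kappa|_\kappa-|h^\kappa|\big|\le\kappa$, so $|h^\kappa|_\kappa^j\to|h|^j$ uniformly as well. The convergence (\ref{hx}) gives $h^\kappa_x\to h_x$ in $L^2(Q_T)$. Writing each product as a uniformly convergent bounded factor times a strongly $L^2$-convergent factor yields (\ref{5.4}) and (\ref{5.5}).

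\textbf{Terms containing $h_{xx}$.} By (\ref{fuzhu}), $h^\kappa_{xx}\rightharpoonup h_{xx}$ weakly in $L^2(Q_T)$.
\begin{itemize}
\item For (\ref{weak2}), the factor $|h^\kappa|_\kappa^3$ converges uniformly to $|h|^3$. A product of a uniformly convergent factor and a weakly convergent $L^2$ sequence converges weakly in $L^2$: test against $\phi\in L^2(Q_T)$ and split off the term $(|h^\kappa|_\kappa^3-|h|^3)h^\kappa_{xx}\phi$, which tends to zero.
\item For (\ref{weak1}), write the term as $\big(|h^\kappa|_\kappa h^\kappa h^\kappa_x\big)\,h^\kappa_{xx}$. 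The first factor converges strongly in $L^2(Q_T)$ to $|h|hh_x$ by the same argument as above. The second converges weakly in $L^2(Q_T)$. A strong--weak $L^2$ product converges weakly in $L^1$, tested against $L^\infty$ functions.
\end{itemize}

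\textbf{Main obstacle.} I expect the delicate step to be (\ref{hx}). It requires the time-derivative bound (\ref{4.22}) in $H^{-2}$ together with the spatial $H^1$ bound on $h_x^\kappa$, both uniform in $\kappa$. It also requires checking that the resulting limit coincides with $\partial_x$ of the limit in (\ref{5.3}). If the $L^2(0,T;C^\alpha)$ convergence proves awkward, I would instead interpolate between the uniform $C^\alpha$ convergence of $h^\kappa$ and the $L^2H^2$ bound. This gives strong convergence of $h_x^\kappa$ in $L^2(0,T;H^{1-\epsilon})\subset L^2(0,T;C^\alpha)$ for $\alpha<\frac12-\epsilon$.
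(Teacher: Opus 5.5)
Your proposal is correct and follows the paper's proof essentially step for step. It uses the same two applications of Lemma \ref{aubin} (with $B_1=H^{-1}$ for $h^\kappa$ and $B_1=H^{-2}$ for $h^\kappa_x$), the bound $\big||h^\kappa|_\kappa-|h^\kappa|\big|\le\kappa$ for uniform convergence of the coefficients, and uniform-times-weak and strong-times-weak product arguments for (\ref{weak2}) and (\ref{weak1}); your remarks on choosing compatible subsequences, identifying the limit of $h^\kappa_x$ with $h_x$, and the interpolation fallback only make explicit points the paper leaves implicit.
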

\begin{proof}
	Applying Lemma $\ref{aubin}$, we choose $p_0=+\infty$, $p_1=2$ and
	\begin{equation*}
		B_0=H^1(\Omega),\quad B=C^\alpha(\overline{\Omega}),\quad B_1=H^{-1}(\Omega)
	\end{equation*}
	for any $0 < \alpha < \frac{1}{2}$. The estimates $(\ref{4.3})$ and $(\ref{lemma4.2})$ imply that the sequence $h^\kappa$ is uniformly bounded in $L^\infty(0,T; H^1(\Omega))$ and that $h^\kappa_t$ is uniformly bounded in $L^2(0,T; H^{-1}(\Omega))$. It follows from Lemma $\ref{aubin}$ and $(\ref{fuzhu})$ that there exists a subsequence, still denoted by $h^\kappa$, which converges strongly in $C([0,T];C^\alpha(\overline{\Omega}))$ to a limit function $h$. This proves $(\ref{5.3})$.

	\noindent Next, applying Lemma $\ref{aubin}$, we choose $p_0=2$, $p_1=2$ and 	
	\begin{equation*}
		B_0=H^1(\Omega),\quad B=C^\alpha(\overline{\Omega}),\quad B_1=H^{-2}(\Omega)
	\end{equation*}
	for any $0 < \alpha < \frac{1}{2}$. We deduce from $(\ref{4.3})$, $(\ref{D2})$ and $(\ref{4.22})$ that the sequence $h_x^\kappa$ is uniformly bounded in $L^2(0,T;H^1(\Omega))$ and $h^\kappa_{xt}$ is uniformly bounded in ${L^2(0,T;H^{-2}(\Omega))}$. Combining this with Lemma $\ref{aubin}$ and $(\ref{fuzhu})$ yields $(\ref{hx})$.

	\noindent Since the sequences $h^\kappa$ and $h^\kappa_t$ are uniformly bounded in $L^\infty(0,T;H^1(\Omega))\cap L^2(0,T;H^2(\Omega))$ and $L^2(0,T; H^{-1}(\Omega))$, respectively, we extract subsequences of $h^\kappa$ and $h^\kappa_t$ that converges weakly-$\ast$ to $h$ and $h_t$ in their respective spaces. Thus, we have $h\in L^\infty(0,T;H^1(\Omega))\cap L^2(0,T;H^2(\Omega))$ and $h_t\in L^{2}(0,T;H^{-1}(\Omega))$. This prove $(\ref{5.1})$ and $(\ref{5.2})$.
	
	\noindent By $(\ref{kappa})$, we have 
	\begin{align}\label{fuzhu2}
		\begin{split}
			\left|\left|\,|h^\kappa|_{\kappa}-|h|\,\right|\right|_{L^{\infty}(Q_{T})}&\leq\left|\left|\,|h^\kappa|_{\kappa}-|h^\kappa|\,\right|\right|_{L^{\infty}(Q_{T})} + \left|\left|\,|h^{\kappa}|-|h|\right|\right|_{L^{\infty}(Q_{T})}
			\\
			&\leq \kappa + \left|\left|\,h^{\kappa}-h\right|\right|_{L^{\infty}(Q_{T})}.
		\end{split}
	\end{align}
	
	\noindent From $(\ref{5.3})$ and $(\ref{fuzhu2})$, we deduce that
	$|h^\kappa|_{\kappa}$ converges uniformly to $|h|$ on $Q_{T}$. Consequently, $|h^\kappa|_{\kappa}^3$ and $|h^\kappa|_{\kappa}^2$ converge uniformly to $|h|^3$ and $|h|^2$ on $Q_{T}$, respectively. Combining this with $(\ref{hx})$ yields $(\ref{5.4})$ and $(\ref{5.5})$.
	
	\noindent By $(\ref{D2})$ and $(\ref{fuzhu})$, we obtain
	\begin{align}\label{hxx}
		h_{xx}^\kappa \rightharpoonup h_{xx} \quad \text{weakly in } L^2(Q_T).
	\end{align}
	Combining this with the uniform convergence of $|h^\kappa|_{\kappa}^3$ to $|h|^3$ on $Q_{T}$, we obtain $(\ref{weak2})$.
	
	\noindent Finally, the uniform convergence of $|h^\kappa|_{\kappa}$ to $|h|$ on $Q_{T}$ implies that $|h^\kappa|_{\kappa}h^\kappa$ converges uniformly to $|h|h$ on $Q_{T}$. Combining this with $(\ref{hx})$ and $(\ref{hxx})$ yields $(\ref{weak1})$. The proof of this lemma is complete.
\end{proof}

By exploiting the singular behavior of the entropy functional as $h \to 0$, we now show that the limit function $h$ remains non-negative and is positive almost everywhere.

\begin{lema}\label{zhengxing}
	If the conditions $(\ref{jiashe})$ hold for the approximate problem $(\ref{a1})-(\ref{a3})$, then the function $h$ in Lemma \ref{shoulianlema} satisfies
	\begin{align}\label{feifu}
		&h \geq 0 \ \ \text{in} \ Q_T,
		\\\label{daoshu}
		&\frac{1}{h}\in L^{\infty}(0,T;L^1(\Omega)).
	\end{align}
	Moreover, the set $\{x \ | \ h(t,x)=0\}$ has measure zero for any $t\in [0,T]$.
\end{lema}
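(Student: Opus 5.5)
The plan is to use the uniform entropy bound (\ref{6.15}), $\sup_{t\in[0,T]}\int_\Omega G_\kappa(h^\kappa(t,x))\,dx\le C$ with $C$ independent of $\kappa$, together with the uniform convergence (\ref{5.3}) of $h^\kappa$ to $h$ on $\overline{Q_T}$. Nonnegativity will come from a contradiction with the blow-up of $G_\kappa$ on $(-\infty,0]$ as $\kappa\to0$, and the integrability of $1/h$ from Fatou's lemma applied to the explicit limit $G_0$ in (\ref{g}).

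\emph{Nonnegativity.} Suppose $h(t_0,x_0)<0$ for some $(t_0,x_0)\in Q_T$. Since $h\in C([0,T];C^\alpha(\overline\Omega))$ and $h^\kappa\to h$ uniformly, there are $\delta>0$, a spatial interval $I\ni x_0$ of positive length, and $\kappa_0$ such that $h^\kappa(t_0,x)\le-\delta$ for $x\in I$ and $\kappa<\kappa_0$. Since $G_\kappa$ is convex with minimum $0$ at $c>0$, it is decreasing on $(-\infty,c)$, so $G_\kappa(h^\kappa(t_0,x))\ge G_\kappa(-\delta)$ on $I$. From the explicit formula (\ref{G}), the term $\frac{s}{\kappa^2}\bigl(\frac{s}{|s|_\kappa}-\frac{c}{|c|_\kappa}\bigr)$ at $s=-\delta$ equals $\frac{\delta}{\kappa^2}\bigl(\frac{\delta}{|\delta|_\kappa}+\frac{c}{|c|_\kappa}\bigr)\ge \frac{\delta}{\kappa^2}$ for small $\kappa$, while the remaining terms are bounded. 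Hence $G_\kappa(-\delta)\to\infty$, so $\int_\Omega G_\kappa(h^\kappa(t_0))\,dx\ge |I|\,G_\kappa(-\delta)\to\infty$, contradicting (\ref{6.15}). Therefore $h\ge0$ in $Q_T$, which is (\ref{feifu}).

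\emph{Integrability of $1/h$.} Fix $t\in[0,T]$. For each $x$ with $h(t,x)>0$, I will show $G_\kappa(h^\kappa(t,x))\to G_0(h(t,x))$. This follows because $G_\kappa\to G_0$ locally uniformly on $(0,\infty)$ (from (\ref{G}), or from the integral definition by dominated convergence) and $h^\kappa(t,x)\to h(t,x)$. At points where $h(t,x)=0$, I will show $\liminf_\kappa G_\kappa(h^\kappa(t,x))=\infty=G_0(0)$, using the same lower bound as above together with monotonicity. Concretely, for $s\le\varepsilon$ we have $G_\kappa(s)\ge G_\kappa(\varepsilon)\to G_0(\varepsilon)\approx\frac{1}{2\varepsilon}$, and then we let $\varepsilon\to0$. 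Fatou's lemma and (\ref{6.15}) then give $\int_\Omega G_0(h(t,x))\,dx\le C$ uniformly in $t$. Since $G_0(s)\ge\frac{1}{2s}-\frac1c$ for $s>0$, $G_0(0)=\infty$, and $\Omega$ is bounded, this yields $\sup_t\int_\Omega\frac1{h}\,dx\le C$, i.e. (\ref{daoshu}). Finiteness of $\int_\Omega G_0(h(t))\,dx$ forces $\{x: h(t,x)=0\}$ to have measure zero for every $t$.

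The main obstacle is the pointwise lower semicontinuity at zeros of $h$, i.e. controlling $G_\kappa(h^\kappa)$ when both $\kappa\to0$ and $h^\kappa\to0$ simultaneously. I will handle it with the monotonicity of $G_\kappa$ on $(-\infty,c]$ and the convergence $G_\kappa(\varepsilon)\to G_0(\varepsilon)$ at the fixed level $\varepsilon$, as described above. A second point to check is that (\ref{6.15}) holds for every $t\in[0,T]$ rather than only almost every $t$; this is fine since $h^\kappa$ is continuous in time.
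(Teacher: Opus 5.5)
Your proposal is correct and follows essentially the same route as the paper: nonnegativity comes from a contradiction with the uniform entropy bound (\ref{6.15}), using the uniform convergence (\ref{5.3}) and the monotonicity of $G_\kappa$ below $c$, and $1/h\in L^\infty(0,T;L^1(\Omega))$ comes from Fatou's lemma with the explicit limit $G_0$. The differences are minor: you get $G_\kappa(-\delta)\to\infty$ from the closed form (\ref{G}) instead of from monotone convergence of $g_\kappa$, and you obtain the measure-zero property of $\{h(t,\cdot)=0\}$ directly from Fatou via the pointwise bound $\liminf_{\kappa\to0}G_\kappa(h^\kappa)\ge G_0(h)$ with $G_0(0)=+\infty$, rather than through the paper's separate contradiction argument, which makes your version slightly more economical.
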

\begin{proof}
	First, we prove that $h \geq 0$ in $Q_T$. Suppose, for the sake of contradiction, that there exists a point $(t_0,x_0) \in Q_T$ such that $h(t_0, x_0) < 0$. By $(\ref{5.3})$, $h^\kappa$ converges uniformly to $h$ on $Q_T$. Thus, there exist $\delta > 0$ and $\kappa_0 > 0$ such that for any $\kappa < \kappa_0$ and all $(t,x)$ satisfying $|t-t_0| + |x-x_0| < \delta$, we have
	\begin{align*}
		h^\kappa(t,x) < -\delta.
	\end{align*}
	For such $(t,x)$, using $(\ref{6.2})$ and the monotone convergence theorem, we obtain
	\begin{align*}
		G_\kappa(h^\kappa) = -\int_{h^\kappa}^{c} g_\kappa(s) \, ds \geq -\int_{-\delta}^{0} g_\kappa(s) \, ds \rightarrow -\int_{-\delta}^{0} g_0(s) \, ds \quad \text{as } \kappa \rightarrow 0,
	\end{align*}
	where $g_0(s) = \lim\limits_{\kappa \rightarrow 0} g_\kappa(s)$. By $(\ref{6.2})$, $g_0(s) = -\infty$ for $s < 0$, so the integral on the right-hand side equals $+\infty$. Thus one conclude that
	\begin{align*}
		\lim_{\kappa \rightarrow 0} \int_{\Omega} G_\kappa(h^\kappa) \, dx = +\infty,
	\end{align*}
	which contradicts $(\ref{6.15})$. This proves $(\ref{feifu})$.
	
	\noindent Next, we show that for any $t \in [0, T]$, the set $\{x \in \Omega \mid h(t,x)=0\}$ has measure zero. If not, then for some $t_1 \in [0,T]$, the set $E := \{x \in \Omega \mid h(t_1,x)=0\}$ has positive measure. From $(\ref{5.3})$, there exists a modulus of continuity $\sigma(\kappa)$ satisfying $0 < \sigma(\kappa) < \frac{c}{2}$ such that for any $x \in E$,
	\begin{align*}
		h^\kappa(t_1,x) < \sigma(\kappa).
	\end{align*}
	For any $\delta > 0$, we have $\sigma(\kappa) < \delta$ provided $\kappa$ is sufficiently small. Then for any $x \in E$,
	\begin{align*}
		G_\kappa(h^\kappa(t_1,x)) \geq -\int_{\sigma(\kappa)}^{c} g_\kappa(s) \, ds \geq -\int_{\delta}^{c} g_\kappa(s) \, ds \rightarrow -\int_{\delta}^{c} g_0(s) \, ds \quad \text{as } \kappa \rightarrow 0.
	\end{align*}
	Applying $(\ref{g})$, we have
	\begin{align*}
		-\int_{\delta}^{c} g_0(s) \, ds \geq \frac{d}{\delta},
	\end{align*}
	where $d > 0$ is a constant. It follows that
	\begin{align*}
		\varlimsup_{\kappa \rightarrow 0} \int_{\Omega} G_\kappa(h^\kappa(t_1,x)) \, dx \geq \frac{d}{\delta} \text{meas}(E) \rightarrow \infty \quad \text{as } \delta \rightarrow 0,
	\end{align*}
	which contradicts $(\ref{6.15})$. Thus, the set $\{x \in \Omega \mid h(t,x)=0\}$ has measure zero for all $t \in [0, T]$.
	
	\noindent Finally, we prove that $1/h \in L^1(\Omega)$ for any $t \in [0,T]$. For points $(t,x)$ where $h(t,x) > 0$, it follows from $(\ref{6.2})$ and $(\ref{5.3})$ that
	\begin{align}\label{6.21}
		G_\kappa(h^\kappa(t,x)) \rightarrow G_0(h(t,x)) \quad \text{a.e. in } Q_T,
	\end{align}
	where $G_0$ satisfies $(\ref{g})$. Since $\{x \in \Omega \mid h(t,x)=0\}$ has measure zero for any $t\in [0,T]$, it follows that, $(\ref{6.21})$ holds for almost all $x$ for each $t \in [0,T]$. Using $(\ref{6.15})$ and Fatou's lemma, we deduce that for all $t \in [0,T]$,
	\begin{align*}
		\int_{\Omega} G_0(h(t,x)) \, dx \leq C.
	\end{align*}
	In view of $(\ref{g})$, this yields $1/h \in L^1(\Omega)$ for any $t \in [0,T]$, which proves $(\ref{daoshu})$. The proof of this lemma is complete.
\end{proof}

\noindent{\textbf{Proof of Theorem 1.2.}} Let $h^\kappa$ be the sequence of solutions to the approximate problem $(\ref{a1})-(\ref{a3})$.
According to Lemma $\ref{shoulianlema}$, there exists a subsequence, still denoted by $h^\kappa$, that converges to $h$. We shall show that $h$ is a global weak solution of the original problem $(\ref{1})-(\ref{3})$ in sense of Definition $\ref{dingyi1}$.

Recalling $(\ref{5.1})$ yields $(\ref{d1})$. To verify $(\ref{d2})$, we multiply $(\ref{a1})$ by a test function $\varphi \in C^\infty([0,T] \times \bar{\Omega})$ satisfying $\varphi = 0$ on $[0,T] \times \partial\Omega$ and $\varphi(T, \cdot) = 0$. Integrating the resulting equation over $Q_{T}$ and using $(\ref{a2})-(\ref{a3})$, we obtain
\begin{align*}
	&(h^\kappa,\varphi_t)_{Q_{T}}-\alpha_{1} ( h_{xx}^\kappa, (|h^\kappa|_{\kappa}^3\varphi_x)_x)_{Q_{T}} 
	\nonumber\\
	=&\alpha_{2} (|h^\kappa|_{\kappa}^3 h_x^\kappa,  \varphi_x)_{Q_{T}} - \alpha_{3} (|h^\kappa|_{\kappa}^2 h_x^\kappa, \varphi_x)_{Q_{T}}-(h_0,\varphi(0))_{\Omega}.
\end{align*}
Equation $(\ref{d2})$ follows from this relation provided that we prove that
\begin{align} \label{5.29}
	(h^\kappa,\varphi_t)_{Q_{T}} \quad &\rightarrow \quad (h,\varphi_t)_{Q_{T}},
	\\ \label{5.30}
	( h_{xx}^\kappa, (|h^\kappa|_{\kappa}^3\varphi_x)_x)_{Q_{T}} \quad &\rightarrow \quad ( h_{xx},(h^3\varphi_x)_x)_{Q_{T}} ,
	\\ \label{5.31}
	(|h^\kappa|_{\kappa}^3 h_x^\kappa, \varphi_x)_{Q_{T}} \quad &\rightarrow \quad (h^3 h_x, \varphi_x)_{Q_{T}},
	\\ \label{5.32}
	(|h^\kappa|_{\kappa}^2 h_x^\kappa, \varphi_x)_{Q_{T}} \quad &\rightarrow \quad (h^2 h_x, \varphi_x)_{Q_{T}}.
\end{align}
for $\kappa\rightarrow 0$. Specifically, $(\ref{5.29})$ is a direct consequence of $(\ref{5.3})$, while $(\ref{5.31})$ follows from $(\ref{5.4})$ and $(\ref{feifu})$. Combining $(\ref{5.5})$ with $(\ref{feifu})$, we see that $(\ref{5.32})$ holds. Furthermore, by applying $(\ref{weak2})$, $(\ref{weak1})$, $(\ref{feifu})$, and the expansion $(|h^\kappa|_{\kappa}^3 \varphi_{x})_{x} = 3|h^\kappa|_{\kappa} h^\kappa h_{x}^\kappa \varphi_{x} + |h^\kappa|_{\kappa}^3 \varphi_{xx}$, we obtain $(\ref{5.30})$.
Thus, we conclude that
\begin{align}\label{zhengzeruojie}
	&(h, \varphi_t)_{Q_{T}} - \alpha_{1} (h_{xx}, (h^3 \varphi_x)_{x})_{Q_{T}} 
	\nonumber\\
	= &\alpha_{2} (h^3 h_x, \varphi_x)_{Q_{T}} - \alpha_{3} (h^2 h_x, \varphi_x)_{Q_{T}} - (h_0, \varphi(0))_{\Omega}.
\end{align}
By combining $(\ref{5.1})$, $(\ref{feifu})$, and $(\ref{zhengzeruojie})$, we establish the existence of a nonnegative weak solution to problem $(\ref{1})-(\ref{3})$ in the sense of Definition $\ref{dingyi1}$. Moreover, $(\ref{5.2})$ ensures that $(\ref{2.9})$ holds, and $(\ref{daoshujieguo})$ follows from $(\ref{daoshu})$. Finally, Lemma $\ref{zhengxing}$ implies $(\ref{jieguozheng})$ and for any $t \in [0,T]$, the set $\{x \mid h(t,x)=0\}$ is of measure zero. This completes the proof of the theorem.

\vskip0.5cm

\noindent\textbf{Data availability}: No data was used for the research described in the article.

\noindent\textbf{Confict of interest}: The authors declare that they have no confict of interest.

\end{document}